\newtheorem{theorem}{Theorem}[section]
\newtheorem{proposition}[theorem]{Proposition}
\newtheorem{corollary}[theorem]{Corollary}
\theoremstyle{definition}
\newtheorem{definition}[theorem]{Definition}
\theoremstyle{remark}
\newtheorem{remark}[theorem]{Remark}
\newcommand{\TryPackage}[3]{\IfFileExists{#1.sty}{\usepackage{#1}#2}{#3}}
\newcommand{\lto}{\longrightarrow}
\newcommand{\al}{\alpha}
\newcommand{\be}{\beta}
\newcommand{\ga}{\gamma}
\newcommand{\ep}{\varepsilon}
\newcommand{\varep}{\varepsilon}
\newcommand{\la}{\lambda}
\newcommand{\si}{\sigma}
\newcommand{\Ga}{\Gamma}
\newcommand{\La}{\Lambda}
\newcommand{\Si}{\Sigma}
\newcommand{\ZZ}{{\mathbb Z}}
\newcommand{\RR}{{\mathbb R}}
\newcommand{\CC}{{\mathbb C}}
\newcommand{\PP}{{\mathbb P}}
\newcommand{\QQ}{{\mathbb Q}}
\newcommand{\cE}{{\mathcal E}}
\newcommand{\mer}{{\mathcal M}} 
\newcommand{\lng}{{\mathcal L}}  
\newcommand{\SLC}{{SL(2, {\mathbb C})}}
\newcommand{\tr}{\operatorname{\it tr}}
\newcommand{\bn}{{\textit{\textbf{n}}}}
\newcommand{\sm}{{\smallsetminus}}
\begin{document}

\title[The SL$(2,\CC)$ Casson invariant for Dehn surgeries on two-bridge knots]
{The SL$(2,\CC)$ Casson invariant for Dehn \\ surgeries on two-bridge knots}


\author{Hans U. Boden}
\address{Mathematics \& Statistics, McMaster University, Hamilton, Ontario, L8S 4K1 Canada}
\email{boden@mcmaster.ca}
\thanks{The first author was supported by a grant from the Natural Sciences and Engineering Research Council of Canada.}

\author{Cynthia L. Curtis}
\address{Mathematics \& Statistics, The College of New Jersey, Ewing, NJ, 08628 USA}
\email{ccurtis@tcnj.edu}
\thanks{}

\subject{primary}{MSC2010}{57M27}
\subject{secondary}{MSC2010}{57M25}
\subject{secondary}{MSC2010}{57M05}
\keyword{Casson invariant}
\keyword{character variety}
\keyword{two-bridge knot}


\dedicatory{}


\date{\today}
\begin{abstract}
We investigate the behavior of the $\SLC$ Casson invariant
for 3-manifolds  obtained by Dehn surgery along two-bridge knots.
Using the results of Hatcher and Thurston, and also results of Ohtsuki, 
we outline how to compute the Culler--Shalen seminorms, and
we illustrate this approach by providing explicit
computations  for double twist knots. We then apply the surgery formula 
to deduce the $\SLC$ Casson invariant for the 3-manifolds obtained by $(p/q)$--Dehn surgery on such knots. These results are applied to prove nontriviality of the $\SLC$ Casson invariant for nearly all  3-manifolds obtained by nontrivial Dehn surgery on a hyperbolic two-bridge knot.
We relate the formulas derived to degrees of $A$-polynomials and use this information to identify factors of higher multiplicity in the $\widehat{A}$-polynomial, which is the $A$-polynomial with multiplicities as defined by Boyer-Zhang.
\end{abstract}

\maketitle


\section*{Introduction}
The goal of this paper is to provide computations of the $\SLC$ Casson invariant for 3-manifolds obtained
by Dehn surgery on a two-bridge knot.
Our approach is to apply the Dehn surgery formula of
\cite{C} and \cite{C1}, and this involves computing the Culler--Shalen seminorms.
In general, the surgery formula applies to Dehn surgeries on small knots $K$ in homology spheres $\Si$,
and a well-known result of Hatcher and Thurston \cite{HT} shows that
 all two-bridge knots are small. The Culler--Shalen seminorm plays a key role in the surgery formula, and we use the results of Ohtsuki \cite{O} to provide the required computations.

As an application, using the classification of exceptional Dehn surgeries on two-bridge knots from \cite{BW}, we prove that nearly all 3-manifolds
given by a nontrivial $(p/q)$--Dehn surgery on a hyperbolic two-bridge knot $K$ have nontrivial $\SLC$ Casson invariant.
 In our previous work \cite[Theorem 2.2]{BC2}, we showed that for any two-bridge knot or torus knot $K$, the homology 3-sphere
obtained by $(1/q)$--Dehn surgery on $K$ has nonzero $\SLC$ Casson invariant whenever $q \neq 0$, and Theorem \ref{nontriv} gives a new proof extending this result to nearly all $p/q$ surgeries.

Our second application is to $A$-polynomials. The $A$-polynomial as originally defined in \cite{CCGLS} has no repeated factors, but in \cite{BZ2}, Boyer and Zhang give an alternative approach by
 using the degree of the restriction map to assign multiplicities to each one-dimensional component in the character variety.
For a given knot, we denote the original $A$-polynomial by $A_K(M,L)$ and the Boyer-Zhang polynomial by $\widehat{A}_K(M,L)$
 (see Section \ref{A-polynomial}).
If $K$ is a small knot, then  $\widehat{A}_K(M,L)$ and $A_K(M,L)$ have the same irreducible factors, only $\widehat{A}_K(M,L)$ may include factors of higher multiplicity.  We exploit the close relationship between the $\SLC$ Casson invariant, Culler--Shalen seminorms, and the $\widehat{A}$-polynomial to determine closed formulas for the $L$-degrees of $\widehat{A}_K(M,L)$ for all two-bridge knots and for the $M$-degrees of $\widehat{A}_K(M,L)$ for double twist knots. Our techniques also enable computations of the $M$-degree of $\widehat{A}_K(M,L)$ for other two-bridge knots on a case-by-case basis. By comparing our results to known calculations of the $A$-polynomial, we are able to identify knots for which $A_K(M,L) \neq \widehat{A}_K(M,L)$.  In some cases, we are also able to determine the multiplicities of the factors of $\widehat{A}_K(M,L),$ and we illustrate this for the examples of the knots $7_4$ and $8_{11}$.

We briefly outline the contents of this paper. We begin with an introduction of the notation for
the $\SLC$ character varieties, the definition of the $\SLC$ Casson invariant, Culler--Shalen seminorms, and the surgery formula for the $\SLC$ Casson invariant. We then present the two-bridge knots and establish a regularity result for all slopes. We also determine the multiplicities of the curves in the character variety; in fact we show they all equal one. Following \cite{O}, we explain how to calculate the Culler--Shalen seminorms,  and we provide explicit calculations for the family of examples given by the double twist knots $K=J(\ell,m)$ (see Figure \ref{gentwist}). We use this to compute the $\SLC$ Casson invariant
for 3-manifolds obtained by Dehn surgery on $K$. We prove a nontriviality result for the 3-manifold invariant for most Dehn surgeries on hyperbolic two-bridge knots. In the final section we introduce the $A$-polynomial and $\widehat{A}$-polynomial, and we apply our results to make some general comments about the $L$- and $M$-degrees of these polynomials for two-bridge knots,
and we show that the corresponding $\SLC$ knot invariant is nontrivial for all small knots. In the appendix, we list the information on boundary slopes needed to calculate the $\SLC$ Casson invariant for surgeries on 2-bridge knots with up to 8 crossings, and we also list there the $L$- and $M$-degrees of $A_K(M,L)$ and $\widehat{A}_K(M,L)$.

\section{Preliminaries}
Given a finitely generated group $\Ga$, we set $R(\Ga)$ to be the
space of representations $\varrho\colon \Ga \lto \SLC$ and
$R^*(\Ga)$ the subspace of irreducible representations. Recall
from \cite{CS1} that $R(\Ga)$ has the structure of a complex affine
algebraic set. The {\sl character}  of a representation $\varrho$ is
the function $\chi_\varrho\colon \Ga \lto \CC$ defined by setting
$\chi_\varrho(g)=\tr(\varrho(g))$ for $\ga \in \Ga$. The set
of characters of $\SLC$ representations is denoted $X(\Ga)$ and
also admits the structure of a complex affine algebraic set.
Let  $X^*(\Ga)$ denote the subspace of characters of
irreducible representations.
Define $t\colon R(\Ga) \lto
X(\Ga)$ by $\varrho \mapsto \chi_\varrho$, and note that $t$ is
surjective.  Given a manifold $\Si$, we denote by
$R(\Si)$ the variety of $\SLC$ representations of $\pi_1 \Si$ and
by $X(\Si)$ the associated character variety.

We briefly recall the definition of the $\SLC$ Casson invariant.
Suppose now $\Si$ is a closed, orientable 3-manifold
with a Heegaard splitting $(W_1, W_2, F)$. Here, $F$
is a closed orientable surface embedded in $\Si$, and $W_1$ and $W_2$ are handlebodies
with boundaries $\partial W_1 = F = \partial W_2$
 such that $\Si = W_1\cup_F W_2$.
The inclusion maps $F \hookrightarrow W_i$ and $W_i \hookrightarrow \Si$
induce surjections of fundamental groups. On the level of character varieties, this identifies $X(\Si)$  as the intersection
$$X(\Si) = X(W_1) \cap X(W_2) \subset X(F).$$

There are natural orientations on all the character varieties
determined by their complex structures. The  invariant
$\la_\SLC(\Si)$ is defined as an oriented intersection number of
$X^*(W_1)$ and $X^*(W_2)$ in $X^*(F)$ which counts only compact,
zero-dimensional components of the intersection. Specifically,
there exist a compact neighborhood $U$ of the zero-dimensional
components of $X^*(W_1)\cap X^*(W_2)$ which is disjoint from the
higher dimensional components of the intersection and an isotopy
$h\colon X^*(F) \to X^*(F)$ supported in $U$ such that
$h(X^*(W_1))$ and $X^*(W_2)$ intersect transversely in $U$.
Given a zero-dimensional component $\{\chi\}$ of
$h(X^*(W_1))\cap X^*(W_2)$, we set $\varep_\chi = \pm 1$,
depending on whether the orientation of $h(X^*(W_1))$  followed by
that of $X^*(W_2)$ agrees with or disagrees with the orientation
of $X^*(F)$ at $\chi$.

\begin{definition} Let
$\la_\SLC(\Si) = \sum_\chi \varep_\chi,$
where the sum is  over all zero-dimensional
components  of the intersection $h(X^*(W_1))\cap X^*(W_2)$.
\end{definition}

In \cite{C}, a surgery formula was established for $\la_\SLC(\Si)$ for Dehn surgeries on small knots in integral homology 3-spheres. This result is of central importance in this paper, so we introduce the background material on Culler--Shalen seminorms and recall the statement of the theorem.

Now suppose $M$ is a compact, irreducible, orientable 3-manifold with
boundary a torus. An {\sl incompressible surface} in $M$ is a
properly embedded surface $(F,\partial F) \hookrightarrow (M,\partial M)$ such
that $\pi_1 F \lto \pi_1 M$ is injective and no component of $F$ is
a 2-sphere bounding a 3-ball. A surface $F$ in $M$ is called {\sl essential} if
it is incompressible and has no boundary parallel components.
The manifold $M$ is called {\sl small} if it
does not contain a closed essential surface, and a knot $K$ in
$\Si$ is called {\sl small} if its complement $\Si \sm
\tau(K)$ is a small manifold, where $\tau(K)$ is a tubular neighborhood of $K$.

If $\ga$ is a simple closed curve in $\partial M$, the Dehn
filling of $M$ along $\ga$ will be denoted by $M(\ga)$; it is the
closed 3-manifold obtained by identifying a solid torus with $M$
along their boundaries so that $\ga$ bounds a disk. Note that the
homeomorphism type of $M(\ga)$ depends only on the {\sl slope} of
$\ga$ -- that is, the unoriented isotopy class of $\ga$. Primitive
elements in $H_1(\partial M; \ZZ)$ determine slopes
under a two-to-one correspondence.

If $F$ is an essential surface in $M$
with nonempty boundary, then all of its  boundary
components are parallel and the slope of one (hence all) of these
curves is called the {\sl boundary slope} of $F$. A
slope is called a {\sl strict boundary slope} if it is
the boundary slope of an essential surface that is not the
fiber of any fibration of $M$ over $S^1$.

For each $\ga \in \pi_1 M,$ there is a regular map $I_\ga\colon X(M)
\lto \CC$ defined by $I_\ga(\chi) = \chi(\ga).$ Let
$e\colon H_1(\partial M;\ZZ)\lto \pi_1(\partial M)$ be the
inverse of the Hurewicz
isomorphism.  Identifying $e(\xi) \in \pi_1(\partial M)$ with its
image in $\pi_1 M$ under the natural map $\pi_1(\partial M) \lto
\pi_1 M,$ we obtain a well-defined function $I_{e(\xi)}$ on $X(M)$
for each $\xi \in H_1(\partial M; \ZZ).$ Let $f_\xi\colon X(M) \lto \CC$
be the regular function defined by $f_\xi  = I_{e(\xi)} -2$ for
$\xi \in H_1(\partial M; \ZZ).$

Let $r\colon X(M)\lto X(\partial M)$ be the restriction map induced by
$\pi_1(\partial M) \lto \pi_1 M$. Suppose $X_i$ is an algebraic
component of $X(M)$ with $\dim X_i =1$ such that $r(X_i)$ is
also one-dimensional.  Let $f_{i,\xi}\colon X_i \lto \CC$ denote the regular
function obtained by restricting $f_{\xi}$ to $X_i$ for each $i$.

Let $\widetilde{X}_i$ denote the smooth, projective curve
birationally equivalent to $X_i$. Regular functions on $X_i$
extend to rational functions on $\widetilde{X}_i$. We abuse
notation and denote the extension of $f_{i,\xi}$ to
$\widetilde{X}_i$ also by $f_{i, \xi}\colon \widetilde{X}_i \lto \CC
\cup \{ \infty \} = \CC \PP^1.$

If $M$ is hyperbolic, then Proposition 3.1.1 of \cite{CS1} establishes the existence of
a discrete faithful irreducible representation $\varrho_0 : \pi_1(M) \to \SLC,$
and Proposition 2 of \cite{CS2} implies that
the algebraic component $X_0$ of $X(M)$ containing the character $\chi_{\varrho_0}$ is one-dimensional.
In Section 1.4 of \cite{CGLS}, the authors construct a norm $\| \cdot \|_{0}$
on the real vector space $H_1(\partial
M; \RR)$ called the Culler--Shalen norm
associated to $X_0$. The same construction works
 for other dimension one components $X_i$ of
$X(M)$, and in general, one obtains a seminorm $\| \cdot \|_{i}$  on  $H_1(\partial
M; \RR)$ called the Culler--Shalen seminorm associated to $X_i$.

 \begin{definition} \label{CS-seminorm}
For $X_i$ a one-dimensional component of
$X(M)$ containing an irreducible character and whose
restriction $r(X_i)$ is also one-dimensional, define
the seminorm  $\| \cdot \|_{i}$ on $H_1(\partial M; \RR)$   by setting
$$ \| \xi \|_{i} =  \deg(f_{i,\xi})$$
for all $\xi$ in the lattice  $H_1(\partial M; \ZZ)$.
We call a one-dimensional algebraic component $X_i$ of $X(M)$   a {\sl norm curve}
if $\| \cdot \|_i$ defines a norm on $H_1(\partial M; \RR)$.
\end{definition}

The $\SLC$ Casson invariant of a manifold obtained
by Dehn filling is closely related to this seminorm; however we must impose certain
restrictions on the  slope of the Dehn filling.

\begin{definition} \label{irregular}
The slope of a simple closed curve $\ga$ in $\partial M$
is called {\sl irregular} if there exists an irreducible representation
$\varrho\colon \pi_1 M \lto \SLC$ such that
\begin{itemize}
 \item [(i)] the character $\chi_{\varrho}$ of $\varrho$ lies on a one-dimensional
component $X_i$ of $X(M)$ such that $r(X_i)$ is one-dimensional,
 \item [(ii)] $\tr \varrho (\al) =\pm 2$ for all
 $\al$ in the image of $i^*\colon \pi_1(\partial M) \lto \pi_1(M),$
 \item[(iii)] $\ker (\varrho \circ i^*)$ is the cyclic group generated by $[\ga] \in \pi_1(\partial M)$.
\end{itemize}
A slope is called {\sl regular} if it is not irregular.
\end{definition}

With these definitions, we are almost ready to state the
Dehn surgery formula. But first we recall some useful notation for Dehn fillings of knot complements.
For any choice of basis $(u,v)$  for $H_1(\partial M; \ZZ),$ there is
a bijective correspondence between unoriented isotopy classes of
simple closed curves in $\partial M$ and elements in
$\QQ \cup \{ \infty\}$ given by $\ga\mapsto p/q,$ where  $\ga = p u + q v$.
If $M$ is the complement of a knot $K$ in an integral homology
3-sphere $\Si$, then the meridian $\mer$ and preferred longitude $\lng$ of the knot $K$
provide a basis for $H_1(\partial M; \ZZ).$ Consider the 3-manifold $M(\ga)$ resulting from  Dehn filling along the curve $\ga = p \mer + q \lng.$ In this case, we call $p/q$ the slope of $\ga$ and denote by $M_{p/q} = M(\ga)$ the 3-manifold obtained by  $(p/q)$--Dehn surgery along the knot $K$.

\begin{definition}
A slope $p/q$ is called {\sl admissible} for $K$ if
\begin{enumerate}
\item[(i)] $p/q$ is a regular slope which is not a strict boundary slope, and
\item[(ii)] no $p'$-th root of unity is a root of the Alexander polynomial of
$K$, where $p'=p$ if $p$ is odd and $p' = p/2$ if $p$ is even.
\end{enumerate}
\end{definition}

The next result is a restatement of Theorem 4.8 of \cite{C}, as
corrected in \cite{C1}.
\begin{theorem} \label{surg-form}
Suppose $K$ is a small knot in an integral homology 3-sphere $\Si$ with
complement $M$. Let $\{X_i\}$ be the collection of all
one-dimensional components of the character variety $X(M)$ such
that $r(X_i)$ is one-dimensional and such that $X_i \cap X^*(M)$
is nonempty. Define $\si\colon \ZZ \lto \{0,1\}$ by $\si(p) =0$ if
$p$ is even and $\si(p)=1$ if $p$ is odd.

Then there exist integral weights $m_i > 0$ depending only on
$X_i$ and non-negative numbers $E_0, E_1 \in \frac{1}{2}
\ZZ$ depending only on $K$ such that for every admissible slope
$p/q$, we have
$$
\la_\SLC (M_{p/q}) = \frac{1}{2} \sum_i m_i \| p\mer + q \lng \|_{i} - E_{\si(p)}.
$$
\end{theorem}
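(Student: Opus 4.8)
The plan is to realize the character variety of the surgered manifold as a subvariety of $X(M)$ and to count its isolated irreducible characters slope-by-slope using the trace functions $f_{i,\xi}$. Since $\pi_1(M_{p/q}) = \pi_1 M / \langle\langle \ga \rangle\rangle$ with $\ga = p\mer + q\lng$, a representation of $M_{p/q}$ is exactly a representation $\varrho$ of $\pi_1 M$ with $\varrho(\ga) = I$, so on the level of characters $X^*(M_{p/q})$ is the locus in $X^*(M)$ on which $\ga$ acts trivially. First I would fix a Heegaard splitting of $M_{p/q}$ compatible with the surgery solid torus, so that $\la_\SLC(M_{p/q})$ is computed as the signed count of the zero-dimensional part of this locus. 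The hypothesis that $K$ is small, together with regularity of the slope, is what guarantees via Culler--Shalen theory that the isolated characters being counted cannot escape onto higher-dimensional pieces and in fact all lie on the one-dimensional components $X_i$ with $r(X_i)$ one-dimensional; this is what lets the sum range over exactly those $X_i$.

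The arithmetic core is the observation that genuine surgery points are \emph{double} zeros of $f_{i,\xi}$. On a curve $X_i$ the boundary holonomy is generically diagonalizable, $\mer \mapsto \mathrm{diag}(M, M^{-1})$ and $\lng \mapsto \mathrm{diag}(L, L^{-1})$, so $\ga$ has eigenvalue $\la = M^pL^q$ and $f_{i,\xi} = \tr\varrho(\ga) - 2 = \la + \la^{-1} - 2 = (\la^{1/2} - \la^{-1/2})^2$. Thus $f_{i,\xi}$ vanishes to twice the order to which $\la - 1$ vanishes, and $\la = 1$ at a diagonal boundary character forces $\varrho(\ga) = I$, that is, a bona fide character of $M_{p/q}$. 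Since by Definition~\ref{CS-seminorm} the number of zeros of $f_{i,\xi}$ on $\widetilde{X}_i$ counted with multiplicity equals $\deg f_{i,\xi} = \| \xi \|_i$, each surgery point is counted twice; this is the source of the factor $\frac{1}{2}$ in the formula.

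Next I would isolate the zeros of $f_{i,\xi}$ that do \emph{not} arise from characters of $M_{p/q}$. Because $p/q$ is admissible, hence regular and not a strict boundary slope, the ideal points of $\widetilde{X}_i$ are poles of $f_{i,\xi}$ rather than zeros, so every zero lies at a finite character. The finite non-surgery zeros are of two kinds: boundary-parabolic characters, where $\mer$ and $\lng$ have eigenvalue $\pm 1$ but $\varrho(\ga)$ is a nontrivial parabolic, and reducible characters lying on $X_i$. The second condition in the definition of admissibility---that no $p'$-th root of unity be a root of $\Delta_K$---is precisely what prevents new reducible or parabolic solutions from appearing at the filling slope, so that the set of such points is fixed as $p/q$ varies. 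The key claim is that the total contribution of these points is \emph{independent of $q$ and depends on $p$ only through its parity}. The parity enters because at a character with $\tr\varrho(\mer) = -2$ one has $M^p \to (-1)^p$, so whether the eigenvalue of $\ga$ equals $+1$ or $-1$---and hence whether $f_{i,\xi}$ vanishes there---flips with the parity of $p$. Packaging these finitely many local contributions, each weighted by its order of vanishing and by $m_i$, produces the two constants $E_0, E_1 \in \frac{1}{2}\ZZ$ depending only on $K$.

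Finally I would address the weights and the signs. The integer $m_i > 0$ is the local intersection multiplicity of $X^*(W_1)$ and $X^*(W_2)$ along $X_i$ inside $X^*(F)$; it is intrinsic to $X_i$ and is extracted once and for all from the degree data of the restriction $r$ on $X_i$ (for two-bridge knots the later sections will show $m_i = 1$). Because all of these varieties carry their complex orientations, the oriented intersection points all count with sign $\varep_\chi = +1$, so the signed count defining $\la_\SLC$ agrees with the weighted geometric count, and assembling the above yields $\la_\SLC(M_{p/q}) = \frac{1}{2}\sum_i m_i \| p\mer + q\lng \|_i - E_{\si(p)}$. I expect the genuinely delicate step to be the slope-independence of $E_{\si(p)}$: one must verify that the orders of vanishing at the boundary-parabolic and reducible loci reorganize into a single parity-indexed constant, and that smallness of $K$ really does confine every counted character to the curves $X_i$ rather than to higher-dimensional components created by the filling.
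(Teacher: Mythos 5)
The first thing to note is that the paper contains no proof of Theorem \ref{surg-form}: it is stated there as a restatement of Theorem 4.8 of \cite{C}, as corrected in \cite{C1}, so your proposal has to be measured against that cited argument. Your outline does reproduce its architecture: genuine surgery characters appear as even-order zeros of $f_{i,\ga}$ on the curves $X_i$, the identity $\deg f_{i,\ga}=\|p\mer+q\lng\|_i$ converts the count of zeros into the seminorm (whence the factor $\tfrac{1}{2}$), smallness of $K$ confines irreducible characters to one-dimensional components, admissibility keeps the zeros away from ideal points, and the remaining zeros are packaged into correction terms.

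The genuine gap is at the step you yourself flag as delicate, and your heuristic for it is incorrect rather than merely incomplete. At a boundary-parabolic character let $a,b\in\{\pm1\}$ denote the eigenvalues of $\varrho(\mer)$ and $\varrho(\lng)$; the eigenvalue of $\varrho(\mer^p\lng^q)$ is $a^pb^q$. Your parity analysis accounts only for the factor $a^p$; the factor $b^q$ makes the zero set depend on the parity of $q$ as well whenever $b=-1$. And $b=-1$ is exactly what occurs in practice: for two-bridge knots the computation in the proof of Proposition \ref{reg} shows every such character has $\chi(\lng)=-2$, and the nonzero correction $E_1=(\al-1)/4$ of Proposition \ref{corr} comes precisely from these characters. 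Thus for $p$ odd the parabolic zero set is $\{a=1\}$ when $q$ is even but $\{a=b\}$ when $q$ is odd, so as written your ``constant'' $E_1$ has no reason to be independent of $q$, and the formula does not follow. What rescues the statement is a symmetry absent from your sketch: twisting by the central representation $\eta\colon\pi_1 M\to\{\pm I\}$ with $\eta(\mer)=-I$. Since $\lng$ is null-homologous, $\eta(\lng)=I$, so twisting flips $a$, fixes $b$, preserves irreducibility, and permutes the curves $X_i$ preserving weights and vanishing orders; this exchanges the two zero sets above in a weight-preserving way, and only then is $E_{\si(p)}$ well defined. Two smaller points: the assertion that every isolated intersection point counts with sign $+1$ and local multiplicity $\tfrac{1}{2}m_i\cdot\operatorname{ord}_\chi f_{i,\ga}$ is the analytic core of \cite{C} (positivity of complex-analytic intersections together with the comparison of the Heegaard intersection multiplicity with the order of vanishing) and cannot simply be read off from ``complex orientations''; and the Alexander-polynomial condition is used to show that zeros of $f_{i,\ga}$ at reducible characters on $X_i$ do not exist at all (an abelian character whose meridian eigenvalue $m$ satisfies $m^p=1$ would force a $p'$-th root of unity to be a root of the Alexander polynomial), not merely that ``the set of such points is fixed as $p/q$ varies.''
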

We define the total Culler--Shalen seminorm $\| p/q \|_{T} =  \sum_i m_i \| p\mer + q \lng \|_{i}.$ We note that this is half the norm defined by Culler-Gordon-Luecke-Shalen in \cite{CGLS} and extended to the seminorm case in \cite{BZ1} and \cite{BZ2}. It is twice the norm defined in \cite{BC1}.

We
 briefly recall some useful properties of the $\SLC$ Casson invariant and
 we refer to   \cite{C} and \cite{BC1} for further details.

On closed 3-manifolds $\Si,$ the invariant $\la_\SLC(\Si) \geq 0$ is nonnegative, satisfies $\la_\SLC(-\Si) = \la_\SLC(\Si)$ under
orientation reversal, and is additive  under connected sum  of $\ZZ_2$--homology  3-spheres
  (cf.~Theorem 3.1, \cite{BC1}). If $\Si$ is hyperbolic, then $\la(\Si) > 0$
by Proposition 3.2 of \cite{C}.

If $K$ is a small knot
in an integral homology 3-sphere $\Si$, then
Theorem \ref{surg-form} implies that  the difference
$\la_\SLC (M_{p/(q+1)}) - \la_\SLC (M_{p/q})$
is independent of $p$ and $q$ provided $q$ is chosen sufficiently large.
This allows one to define an  invariant of small knots $K$ in homology 3-spheres by setting
\begin{equation}\label{knotinv}
\la'_\SLC(K) = \la_\SLC (M_{p/(q+1)}) - \la_\SLC (M_{p/q})
\end{equation}
for $q$   sufficiently large.

\section{Two-bridge knots}
For the remainder of this paper, $K=K(\al,\be)$ will denote a two-bridge knot in $S^3$
with complement $M = S^3 \sm \tau(K)$.
Recall from \cite[Chapter 12]{BuZi} that given relatively prime integers $\al,\be$ with $\al>1$ odd, we can associate a knot with two bridges, denoted $K(\al,\be),$ whose associated
knot group $G(\al,\be)= \pi_1(S^3 \sm \tau(K))$
admits the presentation
\begin{equation} \label{knotgroup}
G(\al,\be)=\langle x,y \mid
xw=wy\rangle,
\end{equation}
where $w=y^{\ep_1}x^{\ep_2} \cdots y^{\ep_{\al-2}} x^{\ep_{\al-1}}$
for $\ep_i = (-1)^{\lfloor {i \be}/{\al} \rfloor}$.
 Two such knots $K(\al,\be)$ and $K(\al',\be')$ are equivalent
 if and only if
$\al' = \al$ and $\be' = \be^{\pm 1} \mod \al.$ Thus we can choose $\be$ so that $0<\be < \al$.

In this section, we will outline how to apply Theorem \ref{surg-form} to compute the $\SLC$ Casson invariant for 3-manifolds obtained by $(p/q)$--Dehn surgery on such a knot. Recall that by the results of Hatcher and Thurston \cite{HT}, it follows that all two-bridge knots are small. Further, we know that the $\SLC$ Casson invariant  does not change under orientation reversal (cf.  Theorem 1.2, \cite{BC1}), and since the manifolds obtained by $(p/q)$--Dehn surgery on a knot $K$ are, up to change of orientation, the same as those obtained by $(-p/q)$--Dehn surgery on its mirror image $\overline{K}$, for our purposes, we can choose to work with $K$ or $\overline{K}.$ Note further that the mirror image of the two-bridge knot $K=K(\al,\be)$ is just $\overline{K} = K(\al,-\be) =K(\al,\al-\be)$ (cf. p.185, \cite{BuZi}).

\subsection{Regularity of surgery slopes}
In this section, we show that every slope $p/q$ on a two-bridge knot is regular.
This amounts to showing that any irreducible
$\SLC$ representation $\varrho$ of a two-bridge knot group
with $\chi_\varrho(\mer)=\pm 2$ and $\chi_\varrho(\lng)= \pm 2$
has $\varrho(\lng)^q \neq \varrho(\mer)^p.$

\begin{proposition} \label{reg}
If $K$ is a $2$-bridge knot, then every
slope $p/q$ is regular.
\end{proposition}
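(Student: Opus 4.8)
The plan is to negate Definition~\ref{irregular} directly. Suppose, for contradiction, that some slope $p/q$ is irregular, and let $\varrho\colon\pi_1 M\lto\SLC$ be an irreducible representation realizing conditions (i)--(iii). Writing $\ga = p\mer+q\lng$, condition~(iii) forces $\ga\in\ker(\varrho\circ i^*)$, that is $\varrho(\ga)=I$; in the meridian--longitude basis this is exactly the relation $\varrho(\lng)^q = \varrho(\mer)^p$ flagged just before the statement. Since replacing $\lng$ by $\lng^{-1}$ (equivalently $q$ by $-q$) leaves everything below unchanged, the sign convention is immaterial, and it suffices to prove that no irreducible $\varrho$ with $\tr\varrho(\mer)=\pm2$ and $\tr\varrho(\lng)=\pm2$ can satisfy this relation for a coprime pair $(p,q)$.

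First I would put $\varrho$ in a normal form. Irreducibility gives $\varrho(\mer)\neq\pm I$: the two generators $x,y$ of $G(\al,\be)$ are conjugate meridians, so if $\varrho(\mer)$ were central the whole image would be central, contradicting irreducibility. As $\tr\varrho(\mer)=\pm2$, the matrix $\varrho(\mer)$ is therefore parabolic, and after conjugation I may assume it is $\ep_1$ times a nontrivial unipotent with off-diagonal entry $a\neq0$, where $\ep_1=\pm1$. Because the peripheral subgroup is abelian, $\varrho(\lng)$ commutes with $\varrho(\mer)$ and hence lies in its centralizer; together with $\tr\varrho(\lng)=\pm2$ this means $\varrho(\lng)$ is $\ep_2$ times the unipotent with off-diagonal entry $b$, for some $\ep_2=\pm1$ and $b\in\CC$. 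Here $\ep_1$ and $\ep_2$ are the meridian and longitude eigenvalues.

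With this form $\varrho(\mer)^p$ equals $\ep_1^p$ times the unipotent with entry $pa$, and $\varrho(\lng)^q$ equals $\ep_2^q$ times the unipotent with entry $qb$. Hence $\varrho(\lng)^q=\varrho(\mer)^p$ holds if and only if both $pa=qb$ and $\ep_1^p=\ep_2^q$. The first equation says $b/a=p/q\in\QQ$. If $b/a\notin\QQ$ -- for instance at the discrete faithful character of a hyperbolic two-bridge knot, where $b/a$ is the non-real cusp shape -- then $pa=qb$ fails for every $(p,q)\neq(0,0)$ and there is nothing to prove. The essential case is $b/a=p/q$ rational, and here everything hinges on the eigenvalue signs: I would show that whenever $pa=qb$ one has $\ep_1^p\ep_2^q=-1$, so that the unipotent parts agree but the two sides of $\varrho(\lng)^q=\varrho(\mer)^p$ differ by $-I$, and the relation fails after all.

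The main obstacle is exactly this sign computation, which must be uniform over all two-bridge knots and all their parabolic characters. I would carry it out explicitly from the two-bridge data: in the Riley parametrization of the representations with $\varrho(\mer)$ parabolic the generators $\varrho(x),\varrho(y)$ are standard parabolic matrices depending on a single parameter constrained by the Riley polynomial, and the preferred longitude is the explicit word $\lng=\overleftarrow{w}\,w\,\mer^{-2e}$, where $\overleftarrow{w}$ is the reverse of $w$ and $e$ is the exponent sum making $\lng$ null-homologous. Evaluating this word lets one read off both $b$ and the sign $\ep_2$ at each root of the Riley polynomial with $\tr\varrho(\mer)=\pm2$. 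I expect that in the rational case $b/a$ is forced to be an even integer -- consistent with the theorem of Hatcher--Thurston that the boundary slopes of two-bridge knots are even integers -- so that $q=1$ and the required relation reduces to the assertion that the longitude eigenvalue is $\ep_2=-1$; this last point is the crux and should follow from a parity count in the word for $\lng$ (as it does for the $(2,n)$ torus knots and the figure-eight knot, where $\ep_2=-1$ is visible from the $A$-polynomial at $M=1$). With the sign relation in hand, both regimes give $\varrho(\lng)^q\neq\varrho(\mer)^p$, so no representation can meet conditions (i)--(iii) and every slope $p/q$ is regular.
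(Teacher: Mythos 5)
There is a genuine gap: your argument reproduces the paper's setup (negate Definition \ref{irregular}, put $\varrho(\mer)$ and $\varrho(\lng)$ into $\pm$unipotent normal form, reduce to a sign-and-parity analysis of $\varrho(\mer^p\lng^q)$), but it defers precisely the two claims that constitute the proof — that the longitude sign is $\ep_2=-1$, and that the ratio $b/a$ is an even integer whenever it is rational — and these are exactly what the paper must work to establish. The paper gets both by evaluating the relator. In the Riley form with $\mu=\pm1$, the relation $\varrho(xw)=\varrho(wy)$ forces
$$\varrho(w)=\begin{pmatrix} a & b \\ tb & 0\end{pmatrix},$$
where $a$, $b$, and the vanishing corner entry $d$ are integer polynomials in $t$ with $d$ monic and $d(t)=0$; hence $t$ is an algebraic integer, and $\det\varrho(w)=1$ gives $tb^2=-1$. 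From this, $\varrho(ww^*)=\begin{pmatrix}-1 & 2ab \\ 0 & -1\end{pmatrix}$, which is your sign claim, and the longitude $\lng=x^{2n}ww^*$ has upper-right entry $2ab\mp 2n$, i.e.\ twice an algebraic integer. So if $\varrho(\mer^p\lng^q)=I$, then $p/q$ equals (up to sign) $2(ab\mp n)$, a rational number that is twice an algebraic integer, hence an even rational integer; thus $p$ is even, while the identity also forces $q$ (when $\tr\varrho(\mer)=2$) or $p+q$ (when $\tr\varrho(\mer)=-2$) to be even — either way both $p$ and $q$ are even, contradicting coprimality. None of this structure is visible without computing $\varrho(w)$ from the relation $xw=wy$; without it, your argument is a plan rather than a proof.

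Moreover, the two mechanisms you suggest for the deferred claims point in unworkable directions. Hatcher--Thurston evenness of boundary slopes is a statement about ideal points of the character variety (incompressible surfaces) and gives no a priori control over the translation ratio $b/a$ at a parabolic irreducible character; the actual source of evenness is the algebraic integrality of $t$ (monicity of $d$) together with the explicit factor of $2$ in the longitude entry. Likewise, $\ep_2=-1$ does not come from a parity count of letters in the word for $\lng$: it comes from the determinant condition $tb^2=-1$ combined with the vanishing of the corner entry of $\varrho(w)$, both consequences of the relator. So the skeleton of your proposal matches the paper's proof, but its crux — the only nontrivial part — remains unproven, and the hints you give for filling it would not succeed as stated.
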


\begin{proof}
Choosing $\be$ odd with $-\al< \be < \al$ and writing the knot group $G(\al,\be)$ as in Equation \eqref{knotgroup},
we note that $\ep_{\al-i}=\ep_i$ for $i=1,\ldots, \al-1,$
hence the integer
$$n := -\sum_{i=1}^{\al-1} \ep_i = -2\sum_{i=1}^{(\al-1)/2} \ep_i$$
is even.

Let $w^*$ be the word in $x,y$ obtained by reversing $w$, specifically
$$w^* = x^{\ep_1}y^{\ep_2} \cdots x^{\ep_{\al-2}} y^{\ep_{\al-1}}.$$
Then the meridian and longitude are
$\mer=x$ and $\lng=x^{2n}w w^*$.

As in
section 7 of \cite{CCGLS} and in section 1 of \cite{Riley},  up to conjugation, we can assume that any irreducible representation $\varrho\colon
G(\al,\be)\lto \SLC$ sends
$$\varrho(x) = \left[ \begin{array}{cc}
\mu & 1\\
0 & \mu^{-1} \end{array} \right] \quad \text{and} \quad \varrho(y) =
\left[ \begin{array}{cc}
\mu & 0\\
t & \mu^{-1} \end{array} \right], $$ where $t$ is
chosen so that $\varrho(xw) = \varrho(wy)$.

Suppose $p/q$ is an irregular slope, where $p,q$ are taken to be relatively prime.
Then there exists an
irreducible representation $\varrho$ of the knot complement satisfying the conditions of Definition \ref{irregular}, so in particular
$\chi_{\varrho}(\mer) = \pm 2, \chi_{\varrho}(\lng) = \pm 2,$ and $
\varrho(\mer^p \lng^q) = I.$  We will show that this leads to a contradiction.

Since $\chi_\varrho(\mer) =\pm 2$, we know that $\mu = \mu^{-1}= \pm 1$.
Each of the four possible terms $\varrho(y^{\ep_1} x^{\ep_2})$
for $\ep_i = \pm 1$ has entries that are $\pm1$ in the first row and
monic linear polynomials (up to sign) in the second row.
A simple inductive proof then shows that
$$\varrho(w)=\left[\begin{array}{cc} a(t)&b(t)\\ c(t) & d(t) \end{array} \right],$$
where $a(t), b(t), c(t), d(t)$ are monic polynomials in $t$ (up to sign) with $\deg a = \deg b = (\al-3)/2$
and $\deg c= \deg d = (\al-1)/2.$

Then
$$\varrho(xw)=\left[ \begin{array}{cc} \pm 1 & 1 \\ 0 & \pm 1 \end{array} \right]
\left[ \begin{array}{cc} a & b \\ c & d
\end{array} \right] =
\left[ \begin{array}{cc} \pm  a +c & \pm  b +d \\ \pm c &
\pm d
\end{array} \right]$$
and
$$ \varrho(wy)=
\left[ \begin{array}{cc} a &b \\ c  & d
\end{array} \right] \left[ \begin{array}{cc} \pm 1 & 0 \\ t & \pm 1
\end{array} \right]
=\left[ \begin{array}{cc}
\pm a + t b & \pm b \\
\pm c + t d &  \pm d
\end{array} \right].$$

Note that $\varrho(xw)=\varrho(wy)$ implies that $d=0$ and $c=tb$, thus
$$\varrho(w)= \left[\begin{array}{cc}a & b
\\ tb & 0
\end{array}\right].$$
Note that $d(t)=0$ shows $t$ is an algebraic integer.
Further,  $t b^2=-1$ since $\varrho(w) \in \SLC$.

Forming the product for the reversed word $w^*$
gives exactly the same matrix as one gets by flipping the matrix $\varrho(w)$ along the
anti-diagonal, thus
 $$\varrho(w^*)= \left[\begin{array}{cc}0 & b
\\ tb & a
\end{array}\right].$$  Hence
$$
 \varrho(ww^*)   =  \left[\begin{array}{cc} a   & b \\
tb & 0 \end{array}\right]\left[\begin{array}{cc} 0 & b \\
tb & a \end{array}\right]
    =  \left[\begin{array}{cc} tb^2 & 2ab \\ 0 & tb^2
   \end{array} \right]
   =    \left[\begin{array}{cc} -1 & 2ab \\ 0 & -1
   \end{array} \right]
   $$
since $tb^2=-1.$ Furthermore,
$$\varrho(x^{2n} ww^*)   =  \left[\begin{array}{cc}\pm 1 & 1 \\
 0 & \pm 1 \end{array} \right]^{2n}
 \left[\begin{array}{cc} -1 & 2 ab \\ 0 & -1
   \end{array} \right]
=\left[\begin{array}{cc} -1 & 2 a b \mp 2n  \\ 0 & -1
   \end{array} \right].$$
In summary, we have
$$\varrho(\mer)=\left[\begin{array}{cc} \pm1 & 1 \\ 0 & \pm 1
   \end{array} \right] \quad \text{and} \quad \varrho(\lng)=\left[\begin{array}{cc} -1 & 2ab \mp 2n \\ 0 & -1
   \end{array} \right].$$

If $\chi(\mer) =2$, then
\begin{align*}
\varrho(\mer^p \lng ^q) &= \left[\begin{array}{cc} 1 & 1 \\ 0 & 1
   \end{array} \right]^p \left[\begin{array}{cc} -1 &2ab - 2n \\ 0 & -1
   \end{array} \right]^q \\
   & = (-1)^q \left[\begin{array}{cc} 1 & p-q(2ab -2n) \\ 0 &  1
   \end{array} \right].
   \end{align*}
   Now $p$ and $q$ are assumed to be relatively prime, but for this matrix to equal
   the identity, we must have $q$ even and $p=q(2ab - 2n)$.
  Reducing this equation mod 2 and noting that $a=a(t), b=b(t)$ are polynomials over $\ZZ$ in
  an algebraic integer,  we conclude that $p$ must also be even,
   which is a contradiction.

   If $\chi(\mer) =-2,$ then
\begin{align*}
\varrho(\mer^p \lng ^q) &=  \left[\begin{array}{cc} -1 & 1 \\ 0 & -1
   \end{array} \right]^p \left[\begin{array}{cc} -1 &  2ab + 2n \\ 0 & -1
   \end{array} \right]^q \\
   &= (-1)^{p+q} \left[\begin{array}{cc} 1 & -p-q(2ab + 2n) \\ 0 &  1
   \end{array} \right].
   \end{align*}
For this matrix to equal the identity, we must have $p+q$ even and $p=-q(2ab + 2n)$.
Again, reducing mod 2, this shows that $p$ is even, which gives the desired contradiction.
\end{proof}

\subsection{Weights and algebraic multiplicities}
In this section we will show that the weights $m_i$ appearing in   Theorem \ref{surg-form} for
surgeries on a two-bridge knot are all equal to 1. This is achieved by identifying $m_i$ with the algebraic multiplicity $e_i$ of the
associated curve in the character variety $X(M)$ of the two-bridge knot complement..

In what follows, we will make use of continued fraction expansions  for $\be/\al$,
which are expressions of the form $\bn=[n_1, n_2, \ldots, n_k],$ where $n_i \in\ZZ$ and satisfy
$$\frac{\be}{\al} =  \frac{1}{n_1+ \frac{1}{\begin{array}{ccc} {n_2 \, +} \\ & \ddots \\ && +\, \frac{1}{n_k} \end{array}}}.$$
Note that, by replacing $[n_1, n_2, \ldots, n_k]$ with $[n_1, n_2, \ldots ,n_k \pm 1,\mp 1]$, we can always arrange $k$ to be odd.
If $K(\al,\be)$ is a two-bridge knot with continued fraction expansion $[n_1, n_2, \ldots, n_k]$, then $K$ can be drawn as the 4-plat closure of the braid $\si_2^{n_1} \si_1^{-n_2} \cdots \si_2^{n_k}$ as shown in Figure \ref{tunnel}.

Following Riley  \cite{Riley} and Le \cite{Le}, we explain how to view the character variety of a two-bridge knot group as a plane curve.
If $\Gamma$ is a group generated by the two elements $x$ and $y$,  then the Cayley-Hamilton theorem shows that every representation $\varrho \colon \Gamma \to \SLC$ is determined up to conjugation by the three traces
$\tr \varrho(x), \tr \varrho(y)$ and $\tr \varrho(xy)$.
In the case of a two-bridge knot group $G(\al,\be)$, the presentation \eqref{knotgroup} implies that $x$ and $y$ are conjugate, and using
the coordinates $t_1 =\tr \varrho(x) = \tr \varrho(y) $ and $t_2 =\tr \varrho(xy)$,
we can view the character variety as a plane algebraic curve in $\CC^2$.
Any plane curve is defined by a single polynomial $\Phi(t_1,t_2)$, which can be written in factored form as $\Phi=\prod_i \Phi_i^{e_i}$, where $\Phi_i$ are the irreducible components and $e_i  \in \ZZ^{>0}$ are their algebraic multiplicities.


\begin{proposition} \label{weights}
If $K$ is a two-bridge knot with complement $M = S^3 \sm \tau(K)$,
then each of the weights $m_i$ in the surgery formula of Theorem \ref{surg-form} is equal to the algebraic multiplicity $e_i$ of the associated
curve $X_i$ of $X(M)$. In particular, we have $m_i=1$.
\end{proposition}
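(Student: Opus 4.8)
The statement naturally splits into two claims: the identification $m_i = e_i$ of the surgery weight of Theorem \ref{surg-form} with the algebraic multiplicity, and the two-bridge-specific fact $e_i = 1$. For the first, I would return to the definition of the weight $m_i$ in \cite{C}, where it records the multiplicity with which the component $X_i$ is counted in the intersection-theoretic construction of $\la_\SLC$. The plan is to recognize this as the scheme-theoretic multiplicity of $X_i$ inside the character variety $X(M)$ --- the length of the local ring of $X(M)$ at the generic point of $X_i$. Since $x$ and $y$ are conjugate in $G(\al,\be)$, the character ring is generated by the two traces $t_1 = \tr\varrho(x)$ and $t_2 = \tr\varrho(xy)$, so $X(M)$ is exactly the plane curve cut out by $\Phi = \prod_i \Phi_i^{e_i}$ in $\CC^2$; the algebraic multiplicity $e_i$ of $\Phi_i$ is by definition this same local length, which gives $m_i = e_i$.

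It then remains to prove $e_i = 1$, i.e. that $X(M)$ is generically reduced along each relevant component $X_i$. My primary approach would be deformation-theoretic: at an irreducible character $\chi \in X_i$ the Zariski tangent space to $X(M)$ is identified with $H^1(\pi_1 M; \slc_{\Ad\varrho})$, and whenever this space is one-dimensional the point $\chi$ is smooth and hence reduced, forcing $e_i = 1$. The task is therefore to exhibit on each $X_i$ a generic character with $\dim H^1 = 1$. Since $\dim X_i = 1$ and $r(X_i)$ is one-dimensional by hypothesis, the boundary restriction detects the deformations, and the cohomological analysis of Culler--Shalen \cite{CS1} for a one-cusped manifold yields $\dim H^1 = 1$ at a generic point. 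Alternatively --- and this is what the two-bridge setting makes available --- one can argue directly from the Riley parametrization recalled above: writing $t = t_2 - t_1^2 + 2$, the nonabelian characters correspond to roots of the Riley polynomial $\phi(\mu,t)$, and reducedness amounts to this polynomial having simple roots for generic meridian trace, which can be read off from the recursion for the entries of $\varrho(w)$.

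The main obstacle is precisely this reducedness claim: ruling out a component along which $\Phi$ vanishes to order greater than one is where the genuine content lies. In the cohomological argument the delicate step is showing that the kernel of $H^1(M) \to H^1(\partial M)$ vanishes at a generic point --- which is exactly where the hypothesis that $r(X_i)$ is one-dimensional is used --- while in the Riley-polynomial approach it is the verification that the relevant discriminant does not vanish identically. A secondary subtlety enters the first part: the weight in \cite{C} is defined through the Casson intersection construction rather than literally as a component multiplicity, so I would need to confirm that the local ring computing it coincides with the local ring of the plane curve $V(\Phi)$ --- a matching of scheme structures that should follow since both are governed by the trace ring of $G(\al,\be)$.
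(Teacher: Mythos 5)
Your decomposition into the two claims $m_i = e_i$ and $e_i = 1$ is exactly the paper's, but in both halves the step you defer is where the entire content lies, so the proposal has genuine gaps. For $m_i = e_i$: in \cite{C} the weight is defined as an intersection multiplicity of $X^*(W_1')$ and $X^*(W_2)$ inside the character variety of a Heegaard splitting surface, and identifying that intersection-theoretic multiplicity with the multiplicity of $\Phi_i$ in the plane curve $\Phi = \prod_i \Phi_i^{e_i}$ is not a formality about ``the trace ring'': it requires exhibiting a splitting in which the Casson-type intersection is literally cut out by the defining equations of $X(M)$. The paper does this geometrically, using the unknotting tunnel $T$ of the 4-plat diagram (Figure \ref{tunnel}): for the resulting genus-two splitting, $X(W_2)$ is the character variety of the free group on the two meridians $x,y$, and the tunnel meridian $u$ represents $xwy^{-1}w^{-1}$, which is precisely the relator of $G(\al,\be)$; hence imposing $\varrho(u)=I$ inside $X(W_2)$ reproduces the plane-curve equations with their multiplicities, giving $m_i = e_i$. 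Your proposal names the statement to be proven but supplies neither this splitting nor any substitute for it.

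The gap in the second half is more serious, because there both of your routes terminate exactly at the unproven claim. In the cohomological route, the hypothesis that $r(X_i)$ is one-dimensional only guarantees that the tangent line of the \emph{reduced} curve $X_i$ injects into $H^1(\partial M; \slc_{\Ad\varrho})$; it cannot exclude extra nilpotent tangent directions of the character scheme, and by the half-lives-half-dies inequality any such direction would lie in $\ker\bigl(H^1(M;\slc_{\Ad\varrho}) \to H^1(\partial M; \slc_{\Ad\varrho})\bigr)$. So ``the kernel vanishes at a generic point'' is equivalent to the reducedness you are trying to prove, and nothing in the general Culler--Shalen theory supplies it: generic smoothness of one-dimensional components with one-dimensional image is not a formal consequence of those hypotheses. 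Likewise, that the discriminant of the Riley polynomial is not identically zero is a restatement of $e_i=1$, not something that can simply be ``read off from the recursion.'' The paper closes this gap by citing Proposition 3.4.1 of \cite{Le}, a nontrivial theorem special to two-bridge knots asserting exactly that the defining polynomial of the character variety is square-free. Without citing or reproving Le's result, the proposal does not close.
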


\begin{proof}
Suppose $K = K(\al,\be)$ and let $M_{p/q}$ be the result of $(p/q)$-Dehn surgery along $K$.
Using an unknotting tunnel $T$, we will construct a genus two Heegaard splitting of $M$ and $M_{p/q}$,
which we use to identify the weight $m_i$ with the algebraic multiplicity $e_i$ for each curve $X_i$ of $X(M)$.

Choose a continued fraction expansion $[n_1, n_2, \ldots, n_k]$ for $\be/\al$ with $k$ is odd, and draw $K$ as the 4-plat knot
 as shown in Figure \ref{tunnel}. The arc $T$ connecting the bottom two lobes of $K$ is an unknotting tunnel for $K$, and attaching $T$ to $K$ determines  a genus two Heegaard splitting $(W_1, W_2)$ for $S^3$ with $W_1$ a regular neighborhood of $K \cup T$ (cf. Figure \ref{tunnel}).
Let $F= W_1 \cap W_2$ denote the splitting surface, which has genus $g(F)=2,$ and suppose $u$ is a closed curve in $F$ giving a meridian around
$T$ as shown. Let $v_{p/q}$ be a curve representing $\mer^p \lng^q$, chosen to avoid the attaching disk for the tunnel $T$.

\begin{figure}[ht]
\begin{center}
\leavevmode\hbox{}
\includegraphics[scale=0.80]{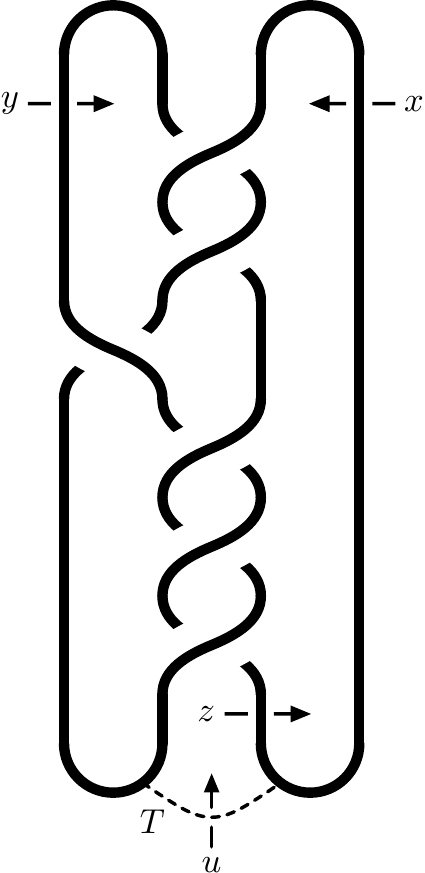}
\caption{The two-bridge knot $6_2=K(11,4)$ with continued fractions $[2,1,3]$ and its associated unknotting tunnel $T$} \label{tunnel}
\end{center}
\end{figure}

If $W_1' = W_1 \sm \tau(K)$ denotes the result of removing a small tubular neighborhood of $K$ from $W_1$, then  $W_1'$ and  $W_2$ form the knot complement $M$. Similarly, if $W_{p/q}$ is the result of attaching a disk to $W_1'$ along the curve $\nu_{p/q}$, then $(W_{p/q},W_2)$ is a Heegaard splitting for the closed 3-manifold $M_{p/q}.$

We have the following descriptions of character varieties:
\begin{eqnarray*}
X(W'_1) &=& \{\chi_{\varrho} \in X (F) \mid  \rho (u) = I \} \\
X (W_{p/q}) &=& \{ \chi_{\varrho}\in X (F) \mid \text{$\varrho (v_{p/q}) = I $ and $\varrho(u) = I$} \}.
\end{eqnarray*}
The Casson invariant
$\la_\SLC(M_{p/q})$ is by definition the intersection number of the zero-dimensional part of the intersection of  $X^*(W_{p/q})$ and  $X^*(W_2)$.

Suppose now that $X_i$ is a curve in $X (M)$. In order to compute the weight $m_i$ associated to $X_i$, we must view $X_i$ as a curve in the intersection of $X(W_1')$ and $X^*(W_2)$. This is the first step in computing the intersection of $X^*(W_{p/q})$ and $X^*(W_2)$. As explained in the proof of Proposition 4.3 of \cite{C}, the weight $m_i$ given in Theorem \ref{surg-form}  is
the intersection multiplicity of $X_i$ as a curve in the intersection $X^*(W_1') \cdot X^*(W_2)$ in the character variety of the splitting surface.

Let $x$ and $y$ be the two meridians of the knot shown, and choose curves $s$ and $t$ bounding disks in $W_2$ such that $x, y, s$ and $t$ form a symplectic basis for $H_1(F)$.
Thus $X(W_2) = \{ \chi_\varrho \in X(F) \mid \varrho(s) =I=\varrho(t) \},$ and it follows that  $X(W_2) \subset X(F)$ is homeomorphic to the character variety of the free group on two generators $x$ and $y$.

Consider the presentation of the knot group $\pi_1 M = G(\al,\be)  = \langle x,y \mid xw=wy\rangle$
from Equation \eqref{knotgroup},  where $w=y^{\ep_1}x^{\ep_2} \cdots y^{\ep_{\al-2}} x^{\ep_{\al-1}}$
for $\ep_i = (-1)^{\lfloor {i \be}/{\al}\rfloor}$. Notice that the generators $G(\al,\be)$ coincide with the generators of the free group
$\pi_1(W_2)$. The element $z$ shown in Figure \ref{tunnel} is given by $wyw^{-1}$, and it follows that $u$ represents the element $xz^{-1} = xwy^{-1}w^{-1}$, which is precisely the relation in the knot group $G(\al,\be)$.
Thus the defining equations for the curve $X_i$ in the intersection $X(W_1') \cdot
X^*(W_2)$ are precisely the defining equations of the curve $X_i$ in $X(M)$. It follows that $m_i=e_i$, and this proves the first statement of the proposition.

To make the second conclusion, notice that Proposition 3.4.1 of \cite{Le} implies that the algebraic multiplicities all satisfy $e_i=1$.
Clearly, this implies $m_i=1$ and completes the proof of the proposition.
\end{proof}

\subsection{Culler--Shalen seminorm} \label{csnorm}
In this section we review Ohtsuki's computation of
the Culler--Shalen seminorm $\| \cdot \|$
for  the complements $M = S^3 \sm \tau(K)$ of two-bridge knots (cf. \cite{O}).

A key ingredient in Ohtsuki's computations are the results of Hatcher and Thurston \cite{HT} determining
the boundary slopes for all two-bridge knots. They showed that two-bridge knots have  integral boundary slopes, and they established a one-to-one correspondence between non-closed, incompressible, $\partial$-incompressible surfaces in the knot complement and continued fraction expansions $[n_1, n_2, \ldots, n_k]$ for  $\be/\al$ and for ${(\be - \al)}/{\al}$ such that $|n_i| \geq 2$ for all $i=1, \ldots, k.$ Given a continued fraction expansion $\bn= [n_1, n_2, \ldots,  n_k]$, we set  $n^+$ to be the number of entries in $[n_1, n_2, \ldots,  n_k]$ whose signs agree with the alternating pattern  $[+,-, \ldots,(-1)^{k-2}, (-1)^{k-1}]$, and we set $n^-$ to be the number of entries with the opposite sign. We also denote by $n^+_0$ and $n^-_0$ the corresponding numbers for the Seifert expansion of $\be/\al$, which is the unique continued fraction expansion   with only even entries. Then by Proposition 2 of \cite{HT}, the boundary slope of the incompressible surface associated to the continued fraction expansion $\bn= [n_1, n_2, \ldots, n_k]$ is given by
\begin{equation} \label{b-slope}
N_{\bn} = 2[(n^+ - n^-) - (n^+_0 - n^-_0)].
\end{equation}

The Culler--Shalen seminorm of a slope $p/q$ is a weighted sum of the intersections of $p/q$ with the boundary slopes of $K$. In Proposition 5.2 of this paper, Ohtsuki finds these weights for a function called $\Phi(p,q)$, with one minor error. In this proposition he shows that the coefficient of the intersection $|p - N_\bn q|$ for $\Phi(p,q)$ is $1/2 \prod_j (|n_j|-1)$. However in Section 4 he shows that the coefficient of $|p|$ in his formula should rather be $1/2 \prod_j (|n_j| - 1)-1/2$,
since the continued fraction expansion of $\be/\al$ for which all terms are even corresponds to the $1/0$--boundary slope. The following result is essentially Proposition 5.2 from \cite{O}, after making this  small correction to the statement
(cf.~p.~18 of Mattman's thesis \cite{M}).
\begin{proposition}[Ohtsuki]
$$\Phi(p,q) = \frac{1}{2}\left(-|p| + \sum_{\bn=[n_1,\ldots, n_k]} \left|p - N_\bn q\right| \; \prod_{j=1}^k \left(|n_j|-1\right)\right).$$
\end{proposition}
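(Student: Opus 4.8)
The plan is to identify $\Phi(p,q)$ with the total Culler--Shalen seminorm $\|p\mer + q\lng\|$ of the two-bridge knot complement $M$, realized as the degree of the trace function $f_{p\mer + q\lng} = I_{\mer^p\lng^q} - 2$ on the smooth projective model $\widetilde{X}$ of the one-dimensional part of $X(M)$ (the union of the relevant components $X_i$), and then to evaluate this degree by localizing at the ideal points. Since trace functions are regular on the affine variety $X(M)$, the function $f_{p\mer+q\lng}$ can have poles only at the ideal points of $\widetilde{X}$, so $\deg f_{p\mer+q\lng} = \sum_x \Pi_x(p\mer + q\lng)$, the sum of the pole orders over ideal points $x$. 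The first step invokes the Culler--Shalen machinery of Section~1.4 of \cite{CGLS}: at each ideal point $x$ the pole-order function $\gamma \mapsto \Pi_x(\gamma)$ is a seminorm on $H_1(\partial M;\RR)$ which is either identically zero or of the form $c_x\,\Delta(\,\cdot\,,\beta_x)$ for a positive integer $c_x$ and a \emph{detected} integral boundary slope $\beta_x$. Grouping ideal points by the slope they detect and using $\Delta(p/q,N) = |p - Nq|$ reduces the problem to showing $\Phi(p,q) = \sum_N a_N |p - Nq|$ with $a_N = \sum_{x:\beta_x = N} c_x$, and then to identifying these aggregate weights.

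The second step uses the special structure of two-bridge knots to match ideal points with surfaces and to pin down the boundary slopes. By Hatcher--Thurston \cite{HT}, the non-closed essential surfaces correspond to the continued fraction expansions $\bn = [n_1,\ldots,n_k]$ of $\be/\al$ and of $(\be-\al)/\al$ with $|n_i| \geq 2$, each carrying the integral boundary slope $N_\bn$ of Equation~\eqref{b-slope}. I would match these to the ideal points of $\widetilde{X}$ through the Riley--Le description of the character curve as a plane curve $\Phi(t_1,t_2) = 0$: the ideal points are read from the behavior of this polynomial at infinity, and the slope detected along each branch is computed from the comparative growth of $\tr\varrho(\mer)$ and $\tr\varrho(\lng)$. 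The quantitative claim to be established is that the total weight collected at the ideal points detecting $N_\bn$ is $a_{N_\bn} = \tfrac12 \prod_{j=1}^k (|n_j| - 1)$.

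This weight computation is the main obstacle. I expect to prove it by a Newton-polygon argument: by \cite{CCGLS} the slopes of the edges of the Newton polygon of the $A$-polynomial are boundary slopes, and the lattice length of the edge of slope $N$ equals the aggregate multiplicity $2a_N$ in the (Culler--Gordon--Luecke--Shalen--normalized) norm, which accounts for the overall factor $\tfrac12$. The product $\prod_j(|n_j| - 1)$ should then emerge from an induction on the length $k$ of the continued fraction, tracking how appending an entry $n_k$ rescales the relevant edge length by the factor $|n_k| - 1$; this recursion is the combinatorial heart of Ohtsuki's Proposition~5.2. The delicate points are keeping the signs and the $\be/\al$ versus $(\be-\al)/\al$ expansions consistent with \eqref{b-slope}, and verifying that distinct expansions yielding the same slope $N_\bn$ contribute additively, as the stated formula demands.

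Finally, I would treat the $-\tfrac12|p|$ correction separately. The all-even (Seifert) expansion yields $N_\bn = 0$ and so contributes the term $|p|$; the generic weight rule would assign it coefficient $\tfrac12\prod_j(|n_j| - 1)$, but this expansion is precisely the one used to normalize the reference quantity $n^+_0 - n^-_0$ in \eqref{b-slope}, so its ideal-point contribution is short by a fixed amount. A direct degree computation at the corresponding ideal point (matching \S4 of \cite{O}) should show that the correct coefficient of $|p|$ is $\tfrac12\prod_j(|n_j| - 1) - \tfrac12$, which is exactly the standalone $-\tfrac12|p|$ appearing in the stated formula. Assembling the three contributions yields the asserted expression for $\Phi(p,q)$.
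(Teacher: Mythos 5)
Your proposal should be measured against the fact that the paper offers no proof of this proposition at all: it is quoted from Proposition 5.2 of \cite{O}, with the coefficient of $|p|$ corrected from $\tfrac12\prod_j(|n_j|-1)$ to $\tfrac12\prod_j(|n_j|-1)-\tfrac12$ following Section 4 of \cite{O} and p.~18 of Mattman's thesis \cite{M}. Your reduction is the right frame, and it is in fact how Ohtsuki's own argument is organized: $\Phi(p,q)$ is the total of the pole orders of $f_{p\mer+q\lng}$ at ideal points, each ideal point contributes $c_x\,\Delta(\cdot,\be_x)$ for a detected boundary slope $\be_x$ by the machinery of \cite{CGLS}, and the candidate slopes are the Hatcher--Thurston slopes $N_\bn$ of \eqref{b-slope}. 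But at that point everything quantitative is deferred: the claim that the ideal points lying over the expansion $\bn$ detect exactly $N_\bn$ and carry aggregate weight $\tfrac12\prod_{j}(|n_j|-1)$ \emph{is} Ohtsuki's Proposition 5.2 --- it is the statement being proved, and your proposal leaves it as an expectation. So the attempt, as written, reduces the proposition to itself.

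Moreover, the specific route you suggest for the weights would not close the gap. The Newton-polygon step is circular: by Proposition 8.8 of \cite{BZ2} the Newton polygon of $A_{X_i}(M,L)$ and the seminorm $\|\cdot\|_i$ are equivalent data, so reading edge lengths off the polygon presupposes knowledge of the $A$-polynomial, which for general two-bridge knots is not available --- indeed this paper runs the implication in the opposite direction, deducing degrees of $\widehat{A}_K(M,L)$ from the seminorms. The proposed induction ``appending $n_k$ rescales the edge length by $|n_k|-1$'' has no foundation: $[n_1,\ldots,n_{k-1}]$ and $[n_1,\ldots,n_k]$ are expansions for \emph{different} knots (or links), and no such multiplicative relation between their character varieties or $A$-polynomials is known; Ohtsuki's actual computation works directly with Riley's plane-curve model of $X(M)$ and analyzes its branches at infinity, not with $A$-polynomials. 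Finally, your treatment of the $-\tfrac12|p|$ term is a heuristic, not a derivation: the correction is not explained by the all-even expansion being ``the normalization reference'' in \eqref{b-slope}, but by the fact that this expansion corresponds to the $1/0$--boundary slope, and it is pinned down by the consistency requirement $\Phi(1,0)=(\al-1)/2$ from Sections 1 and 4 of \cite{O} --- precisely the identity the paper exploits in Proposition \ref{corr}. Without independent proofs of the slope identification, the weight formula, and this normalization, the argument is an outline of Ohtsuki's theorem rather than a proof of it.
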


Ohtsuki's calculation adds the order of the poles
 of the functions $f_{i,\al}$ over ideal points on each curve of the character variety.
Therefore the total Culler--Shalen seminorm equals $\|p/q\|_{T} = \Phi(p,q)$.

\subsection{The correction terms}
In this subsection, we compute the correction terms that appear in the surgery formula (see Theorem \ref{surg-form}). These terms compensate for characters $\chi_\varrho$ of irreducible representations $\varrho \in R^*(M)$ which satisfy $$\varrho(\mer^p \lng^q) = \begin{pmatrix} 1 & 1 \\ 0 & 1 \end{pmatrix}.$$
Such representations satisfy $\tr \varrho(\mer^p \lng ^q)=2$, but they do not extend over  $(p/q)$--Dehn surgery on $K$. The correction terms depend only on the parity of $p$ and are denoted $E_0$ and $E_1$. Specifically, by the proof of Theorem 4.8 of \cite{C}, $E_0$ is the sum of the weights of characters in $X(M)$ for which $\chi(\mer) = \pm 2$,  $\chi(\lng)=\pm 2$, and $\chi(\mer^p \lng^q) = 2$ but for which there is no corresponding representation $\varrho$ of
$X(\Si_{p/q}(K))$, where $p$ is even, and $E_1$ is defined similarly in the case where $p$ is odd.

\begin{proposition}\label{corr}
Given the two-bridge knot $K(\al,\be)$, we have $E_0 = 0$ and $E_1 = (\al - 1)/4$.
\end{proposition}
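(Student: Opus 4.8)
The plan is to count, for a fixed admissible slope $p/q$, exactly which irreducible characters enter the correction term, reusing the matrix computation from the proof of Proposition~\ref{reg}. That proof shows any irreducible $\varrho$ with $\chi_\varrho(\mer)=\pm2$ may be conjugated so that $\varrho(\mer)$ is upper triangular with diagonal entries $\mu=\pm1$ and unit superdiagonal, while $\varrho(\lng)$ is upper triangular with both diagonal entries equal to $-1$. In particular $\chi_\varrho(\lng)=-2$ automatically, so the requirement $\chi(\lng)=\pm2$ in the definition of $E_0,E_1$ is free. The same computation yields $\tr\varrho(\mer^p\lng^q)=2(-1)^q$ when $\mu=1$ and $\tr\varrho(\mer^p\lng^q)=2(-1)^{p+q}$ when $\mu=-1$, and in every case $\varrho(\mer^p\lng^q)\neq I$ --- this is precisely the contradiction derived in Proposition~\ref{reg} for coprime $p,q$. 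Hence any such character with $\chi(\mer^p\lng^q)=2$ is automatically spurious (it does not extend over the filling) and contributes to $E_{\si(p)}$.

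For $E_0$ we have $p$ even, hence $q$ odd, so both trace expressions equal $2(-1)^{\mathrm{odd}}=-2$. No character then satisfies $\chi(\mer^p\lng^q)=2$, the index set in the definition of $E_0$ is empty, and $E_0=0$.

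For $E_1$ we have $p$ odd. If $q$ is even the $\mu=1$ characters have boundary trace $+2$ and the $\mu=-1$ characters have boundary trace $-2$; if $q$ is odd the roles are reversed. Thus for each odd $p$ exactly one of the two families contributes, and to see that $E_1$ is well defined I must check that the two families have the same size. This follows from the sign-change involution sending $\varrho$ to $\varepsilon\,\varrho$, where $\varepsilon\colon\pi_1 M\to\{\pm1\}$ is the homomorphism sending the meridian to $-1$; since the longitude is null-homologous, $\varepsilon(\lng)=1$, so this involution interchanges the $\mu=1$ and $\mu=-1$ families while preserving irreducibility and the value $\chi(\lng)=-2$. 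Each contributing family consists precisely of the irreducible characters whose meridian has trace $2\mu=\pm2$, and by \cite{Riley} there are $(\al-1)/2$ of them (the degree of the Riley polynomial; compare $\deg d=(\al-1)/2$ recorded in the proof of Proposition~\ref{reg}). By Proposition~\ref{weights} each associated curve has multiplicity $m_i=1$, and each spurious boundary-parabolic character is counted in the correction term with weight $\tfrac12 m_i$; summing over the contributing family gives $E_1=\tfrac12\cdot\tfrac{\al-1}{2}=(\al-1)/4$.

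The main obstacle is justifying the weight $\tfrac12$ per spurious character. At a genuine surgery point $\varrho(\mer^p\lng^q)=I$, whereas at a spurious character $\varrho(\mer^p\lng^q)$ is a nontrivial parabolic, so the trace function $I_{\mer^p\lng^q}$ attains the critical value $2$ there, the point at which the two eigenvalue branches of $\mer^p\lng^q$ collide. Tracing through the proof of Theorem~4.8 of \cite{C}, this ramification is exactly what makes such a character account for only half of the multiplicity $m_i$ in the degree computation underlying the Culler--Shalen seminorm, and hence enter $E_{\si(p)}$ with weight $\tfrac12 m_i$. Pinning down this local factor --- and confirming it is consistent with the normalization $E_0,E_1\in\tfrac12\ZZ$ of Theorem~\ref{surg-form} --- is the delicate step; the rest is the parity bookkeeping above.
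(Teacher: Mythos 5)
Your computation of $E_0$ is correct and is exactly the paper's argument: by the matrix form in the proof of Proposition~\ref{reg}, every irreducible character with $\chi(\mer)=\pm2$ has $\chi(\lng)=-2$, and the parity bookkeeping then shows $\chi(\mer^p\lng^q)=-2$ whenever $p$ is even, so no character contributes.

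For $E_1$, however, there is a genuine gap, and it is the one you yourself flag: the assertion that each spurious boundary-parabolic character enters the correction term with weight exactly $\tfrac12 m_i$ is never proved, and it is the entire content of the claim. In the framework of Theorem~4.8 of \cite{C}, what a spurious character $x$ on a curve $X_i$ contributes to $\tfrac12\sum_i m_i\|p\mer+q\lng\|_i$ (and hence must appear in $E_{\si(p)}$) is $\tfrac12 m_i Z_x(f_{i,\ga})$, where $Z_x(f_{i,\ga})$ is the order of vanishing of $f_{i,\ga}$ at $x$, $\ga=p\mer+q\lng$. A priori this order depends on the curve, on the point, and on the slope; recall that away from branch points of the eigenvalue cover of $X(\partial M)$ the functions $f_\ga$ vanish to \emph{even} order, and the image of a spurious character (meridian trace $2$, longitude trace $-2$) lies precisely at a branch point, so ruling out higher-order vanishing requires a local analysis of the branching that you do not supply. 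Your count also silently assumes that the $(\al-1)/2$ roots of the Riley polynomial are simple and give distinct characters, that each lies on exactly one curve $X_i$ with $r(X_i)$ one-dimensional, and that the order $Z_x(f_{i,\ga})$ is the same for every odd-$p$ slope as for the meridian; none of this is established. In effect, verifying that all these local multiplicities are $1$ is equivalent to knowing that the total degree $\|1/0\|_T$ equals $(\al-1)/2$ and is exhausted by simple zeros at the Riley characters --- i.e., you would need Ohtsuki's computation $\Phi(1,0)=(\al-1)/2$ anyway.

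The paper avoids all of this with a normalization trick: Theorem~\ref{surg-form} asserts that $E_1$ depends only on $K$, and the slope $1/0$ is admissible (it is regular by Proposition~\ref{reg}, it is not a boundary slope since two-bridge knots have integral boundary slopes, and $\Delta_K(1)=\pm1$). Since $M_{1/0}=S^3$ and $\la_\SLC(S^3)=0$, the surgery formula gives $0=\tfrac12\|1/0\|_T-E_1$, whence $E_1=\tfrac12\Phi(1,0)=(\al-1)/4$ by Ohtsuki's computation in Section~1 of \cite{O}. If you want to salvage your direct count, the cleanest repair is to run this argument in reverse: use the $1/0$ filling to pin down the total weight of the spurious characters, rather than trying to determine each local weight by hand.
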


\begin{proof}
By the proof of Proposition \ref{reg}, any irreducible $\SLC$ representation $\varrho$ of the knot group with $\chi_{\varrho}(\mer) = \pm 2$ satisfies $\chi_{\varrho}(\lng) = -2$. It follows that $\chi_{\varrho}(\mer^p \lng^q) = -2$ if $p$ is even, so $E_0 = 0$.

If $p$ is odd, we may compute $E_1$ by considering the result of $1/0$--surgery on $K$, which yields $S^3$. In this case $\lambda(M_{1/0}) = \lambda(S^3) = 0$, so in fact $E_1 = 1/2\| 1/0 \|_{T} = \Phi(1,0)/2$. But in Section 1 of \cite{O}, Ohtsuki shows that $\Phi(1,0) = (\al - 1)/2$, and this completes the proof.
\end{proof}

To summarize,
we deduce the following.
\begin{theorem} \label{two-b}
Let $K=K(\al,\be)$ be a two-bridge knot.
 Suppose $p/q$ is not a strict boundary slope, and suppose no $p'$-th root of unity is a root of the Alexander polynomial of
$K$, where $p'=p$ if $p$ is odd and $p' = p/2$ if $p$ is even. Then
$$\la_\SLC (M_{p/q} ) =  \begin{cases}
\frac{1}{2} \| p/q\|_{T} & \text{if $p$ is even,} \\
\frac{1}{2} \| p/q \|_{T} - (\al - 1)/4 & \text{if $p$ is odd.}
\end{cases} $$
\end{theorem}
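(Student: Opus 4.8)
The plan is to recognize Theorem \ref{two-b} as the specialization of the general surgery formula (Theorem \ref{surg-form}) to the two-bridge setting, in which every ingredient has already been pinned down by the preceding propositions. The only genuine tasks are to verify that the stated hypotheses make $p/q$ admissible, so that Theorem \ref{surg-form} applies, and then to substitute the known values of the weights and the correction terms.

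First I would check admissibility. The knot $K=K(\al,\be)$ lies in $S^3$, an integral homology 3-sphere, and by Hatcher--Thurston it is small, so the ambient hypotheses of Theorem \ref{surg-form} are met. For the slope itself, the definition of an admissible slope requires (i) that $p/q$ be regular and not a strict boundary slope, and (ii) that no $p'$-th root of unity be a root of the Alexander polynomial. Condition (ii) and the ``not a strict boundary slope'' half of (i) are precisely the hypotheses imposed in the statement. The one remaining ingredient---regularity---is supplied free of charge by Proposition \ref{reg}, which asserts that \emph{every} slope on a two-bridge knot is regular. Hence every $p/q$ satisfying the hypotheses of the theorem is admissible.

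Next I would invoke Theorem \ref{surg-form}, which gives
$$\la_\SLC(M_{p/q}) = \frac{1}{2}\sum_i m_i \, \|p\mer + q\lng\|_i - E_{\si(p)}.$$
By the definition of the total Culler--Shalen seminorm, $\sum_i m_i\|p\mer+q\lng\|_i = \|p/q\|_T$, so the first term is exactly $\frac{1}{2}\|p/q\|_T$; here Proposition \ref{weights}, which establishes $m_i=1$, is what renders this quantity computable through Ohtsuki's formula for $\Phi(p,q)$. It then remains only to insert the correction terms from Proposition \ref{corr}: since $\si(p)=0$ for $p$ even we use $E_0=0$, and since $\si(p)=1$ for $p$ odd we use $E_1=(\al-1)/4$. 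Splitting into these two parities reproduces verbatim the two cases of the statement.

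Because every step is a direct citation of a previously established result, there is no substantive obstacle internal to this argument; the real content resides in Propositions \ref{reg}, \ref{weights}, and \ref{corr} and in Ohtsuki's seminorm computation. If anything warrants care, it is confirming that the theorem's two hypotheses, together with the automatic regularity from Proposition \ref{reg}, exhaust the definition of admissibility---in particular that regularity need not be assumed as a hypothesis precisely because it holds unconditionally for two-bridge knots.
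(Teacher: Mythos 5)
Your proposal is correct and follows exactly the paper's intended argument: Theorem \ref{two-b} is stated there as a direct summary of Theorem \ref{surg-form} combined with Proposition \ref{reg} (regularity of all slopes, which together with the stated hypotheses yields admissibility), Proposition \ref{weights} ($m_i=1$, so the weighted sum is the total seminorm $\|p/q\|_T$ computable by Ohtsuki's formula), and Proposition \ref{corr} ($E_0=0$, $E_1=(\al-1)/4$). No gaps; this is the same route, spelled out in slightly more detail than the paper itself provides.
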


Note that Ohtsuki's fomula allows Culler--Shalen seminorms to be computed for any two-bridge knot by hand on a case-by-case basis. The contributing boundary slopes and their weights are given for all prime knots up to 8 crossings in the appendix. Then Theorem \ref{two-b} allows the computation of the $\SLC$ Casson invariant for all admissible surgeries on two-bridge knots on a case-by-case basis.

\subsection{Double twist knots} \label{double twist}
We now compute general forumlas for the total Culler--Shalen seminorms for the double twist knots $J(\ell,m)$ depicted in Figure \ref{gentwist}. Here, $J(\ell,m)$ is drawn so that the vertical strands are twisted positively when $\ell >0$ and negatively when $\ell <0$ and so that the horizontal strands are twisted positively when $m>0$ and negatively when $m<0$. For example, $J(2,-2)$ is the figure-8 knot, $J(2,2)$ is the left-handed trefoil, and $J(-2,-2)$ is the right-handed trefoil.

Note that $J(0,m)$ and $J(\ell,0)$ are unknots, and that $J(\pm 1,m)$ and $J(\ell, \pm 1)$ are just $(2,p)$ torus knots or links.
In the general case,
by sliding the horizontal crossings up and to the left, we can view $J(\ell,m)$ as the two-bridge knot (or link) with continued fraction expansion $[\ell, -m]$ that is obtained as the 4-plat closure of the braid $\si_2^\ell \si_1^m.$
Notice that $J(\ell,m)$ is a knot whenever at least one of $\ell$ and $m$ is even.
Otherwise, if $\ell$ and $m$ are both odd, then $J(\ell,m)$ is a link with two components.

 By manipulating the knot diagram, one can easily show that $J(\ell,m) = J(m,\ell)$ and that the  $J(\ell,m)$ has mirror image given by $J(-\ell,-m)$. Noting that for any two-bridge knot $K$, the continued fraction expansions of the mirror image $\overline{K}$ are precisely the negatives of the continued fraction expansions of $K$, we see from the boundary slope formula of Hatcher--Thurston \cite{HT} that the boundary slopes of the mirror image $\overline{K}$  are the negatives of the boundary slopes of $K$ and from the weight formula of Ohtsuki \cite{O} that the weights of the corresponding boundary slopes for $K$ and its mirror image $\overline{K}$ are identical.
We therefore restrict our attention to knots $J(\ell,\pm m)$ with $\ell$ and $m$ positive. We 
consider separately the two cases $J(\ell,-m)$ and $J(\ell,m)$, where $\ell,m >0$.

\smallskip
\noindent \textbf{Case I:} $J(\ell,-m)$ for $\ell,m >0.$

Using the continued fraction expansion $[\ell, m]$, we
determine the rational number
$$\frac{\be}{\al} = \frac{1}{\ell+\frac{1}{m}}.$$
Setting  $\al=\ell m+1$ and $\be = m$, we see that
the double twist knot $J(\ell, -m)$ coincides with the two-bridge knot $K(\al,\be)$.
  In the special case $\ell =1,2$, the knots $J(1,-m)$ and $J(2,-m)$ specialize to the $(2,m+1)$-torus knots and the $m$-twist knots, respectively. When both  $\ell, m \geq 2$, the continued fraction expansion $[\ell,m]$ corresponds to an incompressible, $\partial$-incompressible surface  in the complement of the knot.

\begin{figure}[ht]
\begin{center}
\leavevmode\hbox{}
\includegraphics[scale=0.80]{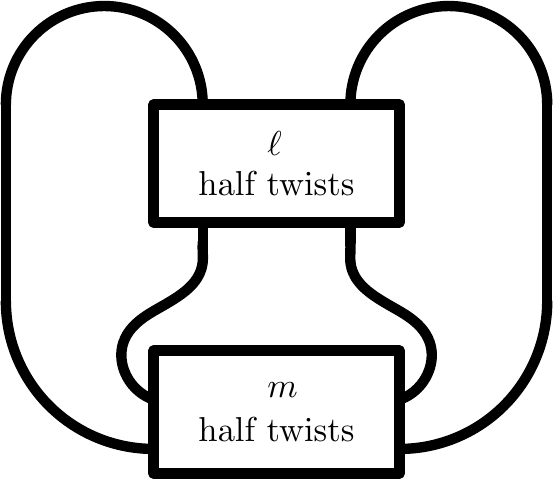}
\caption{The double twist knot $J(\ell,m)$} \label{gentwist}
\end{center}
\end{figure}

An arbitrary two-bridge knot $K=K(\al,\be)$ is a double twist knot of this form if $\al= \ell \be+1$ for some positive integer $\ell,$ and its mirror image, which is the two-bridge knot $\overline{K}=K(\al,-\be)$,  is a double twist knot of this form if $\al = \ell (\al - \be) + 1$ for some positive integer $\ell.$

To calculate the Culler--Shalen seminorms of $J(\ell,-m)$, we first determine all possible continued fraction expansions $[n_1,\ldots, n_k]$ for $\be/\al$ and for $\frac{\be - \al}{\al}$ with $|n_i|\ge 2$. For $\be/\al$,
because $\ell < \frac{\al}{\be} < \ell+1$, we see that $n_1$ must be either $\ell$ or $\ell+1$. If $n_1=\ell,$ the only possibility is  $[\ell,m]$. Otherwise, if $n_1=\ell+1,$ then the condition that $|n_j| \ge 2$ implies that the only possibility is the alternating expansion $[\ell+1,-2,2, \ldots,(-1)^{m-1} 2]$ of length $m$.

Because $-2 < \frac{\al}{\be - \al} < -1$, any continued fraction expansion $[n_1,\ldots, n_k]$ for $\frac{\be - \al}{\be}$ with $|n_i|\ge 2$ must begin with $n_1=-2,$ and the only possibility  is the alternating expansion $[-2,2,\ldots, (-1)^{\ell-1} 2, (-1)^\ell(m+1)]$ of length $\ell$.

Let $A$ denote the continued fraction expansion $[\ell,m]$, $B$ the continued fraction expansion $[\ell+1,-2, \ldots , (-1)^{m-1}2]$ of length $m$, and $C$ the continued fraction expansion $[-2,2, \ldots, (-1)^\ell (m+1)]$ of length $\ell$. For $\ell=1$ or $m=1$, because the expansion $A=[\ell,m]$ has entries with absolute value less than 2, it does not correspond to a boundary slope.

Recall that the Seifert expansion is the unique continued fraction expansion $[n_1,\ldots, n_k]$ with each $n_i$ even for $i=1,\ldots, k.$
Which of $A,B$ or $C$ is the Seifert expansion depends on the parities of $\ell$ and $m$. Since $K$ is a knot, at most one of $\ell$ or $m$ is odd, and one sees that the Seifert expansion is $A$ if both $\ell$ and $m$ are even, $B$  if $\ell$ is odd, and $C$ if $m$ is odd.

We now use Equation \eqref{b-slope} to calculate the boundary slopes for $A,B,$ and $C$.  For the continued fraction expansions  obtained above,  each of $n^+$ and $n^-$ can be computed directly, and we obtain that
 $n^+ - n^- = 0$ for $A$,  $n^+ - n^- = m$ for $B$, and $n^+ - n^- = -\ell$ for $C$.
The other two terms $n_0^+$ and $n_0^-$ depend on the parities of $\ell$ and $m$ and the difference $n_0^+-n_0^-$ will equal $n^+-n^-$ for $A, B$, or $C$ depending on whether  $\ell$ and $m$ are even, or $\ell$ is odd, or $m$ is odd, respectively.
The resulting values of the boundary slopes $N_\bn$ for each of $A, B$ and $C$ in these three separate cases are summarized in Table \ref{boundaryslope}.

\begin{table}[t] 
\begin{center}

 \begin{tabular}{|c|c|c|c|}
\hline $\bn=[n_1,\ldots, n_k]$& \multicolumn{3}{c|}{boundary slope $N_\bn$}\\ \cline{2-4}
continued fraction & $\ell, m$ even & $\ell$ odd & $m$ odd  \\ \hline
 $A=[\ell,m]$ & $0$ & $-2m$ & $2 \ell$  \\
 $B= [\ell+1,-2, \ldots ,(-1)^{m-1} 2]$ & $2m$ & $0$ & $2(\ell +m)$ \\
 $C=[-2,2, \ldots, (-1)^\ell(m+1)]$ & $-2\ell$ & $-2(\ell+m)$ & $0$ \\ \hline
 \end{tabular}
 \end{center}
 \caption{The boundary slopes for  $J(\ell,-m)$ } \label{boundaryslope}
 \end{table}

In order to deduce the Culler--Shalen seminorms, we also need to compute the weights of $\bn=[n_1\ldots, n_k].$ These are given by $ \frac{1}{2} \prod_j (|n_j|-1)$ for all slopes other than the 0-slope, which has weight  $ \frac{1}{2} \prod_j (|n_j|-1) - 1/2$. The values $\prod_j (|n_j|-1)$ are easily determined and are independent of the parities of $\ell$ and $m$; the results are summarized in Table \ref{weightsA}.

\begin{table}[ht]  
\begin{center}
 \begin{tabular}{|c|c|}
\hline $\bn=[n_1,\ldots, n_k]$ & $ \prod_j (|n_j|-1)$ \\ \hline
 $A=[\ell,m]$ &  $(\ell-1)(m-1)$ \\
 $B= [\ell+1,-2, \ldots ,(-1)^{m-1} 2]$  & $\ell$ \\
 $C=[-2,2,\ldots,(-1)^\ell (m+1)]$ &   $m$  \\ \hline
 \end{tabular} \end{center}  
 \caption{Computing the weights for $J(\ell,-m)$ } \label{weightsA}
 \end{table}

 The next theorem
  summarizes this discussion and presents a computation of seminorms for $J(\ell,-m)$.
\begin{theorem} \label{CS-formulaI} If $K=J(\ell,-m)$ with $\ell,m \geq 1$ is a double twist knot, then the Culler--Shalen seminorm of $p/q$ is given by
 $$ 2 \| p/q \|_{T}=  \\ 
 \begin{cases}
\begin{split}
(\ell m &- \ell -m)|p| + \ell|p - 2mq|  \\
& \!\!\!\! + m|p+2\ell q|
\end{split} & \text{if $\ell, m$ are even,} \vspace{3mm}  \\
\begin{split}
(\ell&-1)(m-1)|p+2mq| + (\ell-1)|p| \\
&+ m|p+2(\ell+m)q|\end{split}
& \text{if $\ell$ is odd,} \vspace{3mm}  \\
\begin{split}
(\ell &-1)(m-1)|p-2\ell q|  \\
&+\ell|p-2(\ell+m)q|+(m-1)|p| \end{split} & \text{if $m$ is odd.}
\end{cases} $$
\end{theorem}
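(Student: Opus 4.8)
\emph{Proof proposal.} The plan is to assemble the seminorm directly from Ohtsuki's formula, using the boundary-slope data in Table \ref{boundaryslope} and the weight data in Table \ref{weightsA} already computed for the three continued fraction expansions $A$, $B$, and $C$. Since the total seminorm equals $\Phi(p,q)$, we have
$$2\|p/q\|_{T} = 2\Phi(p,q) = -|p| + \sum_{\bn=[n_1,\ldots,n_k]} |p - N_\bn q|\prod_{j=1}^k\bigl(|n_j|-1\bigr),$$
so the entire computation reduces to substituting the three contributing expansions into this sum and simplifying in each parity case.

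First I would recall the setup established above: writing $J(\ell,-m) = K(\al,\be)$ with $\al = \ell m + 1$ and $\be = m$, the only continued fraction expansions of $\be/\al$ and of $(\be-\al)/\al$ with all $|n_i| \geq 2$ are $A=[\ell,m]$, $B=[\ell+1,-2,\ldots,(-1)^{m-1}2]$, and $C=[-2,2,\ldots,(-1)^\ell(m+1)]$. Because $K$ is a knot, at most one of $\ell$ and $m$ is odd, so the three parity cases appearing in Table \ref{boundaryslope} (both even, $\ell$ odd, $m$ odd) are exhaustive.

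The key observation driving the simplification is that in each parity case exactly one of $A$, $B$, $C$ is the Seifert expansion --- namely $A$ when both are even, $B$ when $\ell$ is odd, and $C$ when $m$ is odd --- and this expansion is precisely the one carrying the $0$--boundary slope. Its term in the sum is $\prod_j(|n_j|-1)\cdot|p|$, which combines with the correction $-|p|$ to produce the reduced coefficient $\prod_j(|n_j|-1)-1$ on $|p|$. I would then treat the three cases in turn. For $\ell,m$ even, substituting $N_A=0$, $N_B=2m$, $N_C=-2\ell$ with products $(\ell-1)(m-1)$, $\ell$, $m$ yields coefficient $(\ell-1)(m-1)-1 = \ell m - \ell - m$ on $|p|$, and the stated formula follows at once; the cases $\ell$ odd and $m$ odd are identical in structure, with the $-|p|$ correction absorbed into the $B$--term or $C$--term respectively.

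There is no substantive obstacle here: the work is bookkeeping, and the only points requiring care are (i) correctly identifying the Seifert expansion in each parity case, so that the $-|p|$ correction is attached to the right term, and (ii) the degenerate cases $\ell=1$ or $m=1$, where $A=[\ell,m]$ fails to have all entries of absolute value at least $2$ and so does not contribute a boundary slope. In those cases, however, the coefficient $(\ell-1)(m-1)$ vanishes automatically, so the stated formula remains valid and no separate argument is needed; this provides a useful consistency check on the computation.
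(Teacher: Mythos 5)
Your proposal is correct and follows exactly the paper's own route: the theorem is stated there as a summary of the preceding discussion, i.e., substituting the boundary slopes of Table \ref{boundaryslope} and the weights of Table \ref{weightsA} for the expansions $A$, $B$, $C$ into Ohtsuki's formula $2\Phi(p,q) = -|p| + \sum_{\bn}|p-N_\bn q|\prod_j(|n_j|-1)$, with the $-|p|$ correction absorbed into the zero-slope (Seifert) term in each parity case. Your handling of the degenerate cases $\ell=1$ or $m=1$ also matches the remark the paper makes immediately after the theorem.
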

Note that in case $\ell=1$ or $m=1$, $J(\ell,-m)$ is a torus knot and $\| \cdot \|_T$ is not a norm.
In this case, $A$ does not correspond to a boundary slope, but the corresponding weight is $(\ell-1)(m-1) =0$ and so the formula in Theorem \ref{CS-formulaI} remains valid.
Otherwise, if $\ell, m \geq 2,$ then $\| \cdot \|_T$ is a norm.

\smallskip
\noindent \textbf{Case II:} $J(\ell,m)$ for $\ell,m >0.$

Note that $J(1,m) =  J(m,1)$ is the $(2,m-1)$-torus knot, which is already covered in Case I as the mirror image of $J(1,2-m)$. Thus, we may assume throughout that $\ell,m \geq 2$. Further, since $J(\ell,m) = J(m,\ell)$, we can assume $\ell \ge m$.

Using the continued fraction expansion $[\ell, -m]$, we determine the rational number
$$\frac{\be}{\al} = \frac{1}{\ell-\frac{1}{m}}.$$
Setting  $\al=\ell m-1$ and $\be = m$, we see that
the double twist knot $J(\ell,m)$ coincides with the two-bridge knot $K(\al,\be)$. Since both  $\ell, m \geq 2$, the continued fraction expansion $[\ell,-m]$ corresponds to an incompressible, $\partial$-incompressible surface  in the complement of the knot.

An arbitrary two-bridge knot $K=K(\al,\be)$ is a double twist knot of this form if $\al= \ell \be-1$ for some positive integer $\ell.$ Its mirror image $\overline{K}$ is a double twist knot of this form if $\al = \ell (\al - \be) - 1$ for some positive integer $\ell.$

To calculate the Culler--Shalen seminorms of $J(\ell,m)$, we must determine all possible continued fraction expansions $[n_1,\ldots, n_k]$ for $\be/\al$ and for $\frac{\be - \al}{\al}$ with $|n_i|\ge 2$.
For $\be/\al$, because $\ell - 1 < \frac{\al}{\be} < \ell$, we see that $n_1$ must be either $\ell$ or $\ell-1$. If $n_1=\ell,$ the only possibility is  $[\ell,-m]$. Otherwise, if $n_1=\ell-1,$ then the condition that $|n_j| \ge 2$ implies that the only possibility is the expansion $[\ell-1,2,-2, \ldots,(-1)^{m} 2]$ of length $m$, which is alternating beginning with the second term.

Because $-2 < \frac{\al}{\be - \al} < -1$, any continued fraction expansion $[n_1,\ldots, n_k]$ for $\frac{\be - \al}{\be}$ with $|n_i|\ge 2$ must begin with $n_1=-2.$ Here there are two possibilities: the expansion $[-2,2,\ldots, (-1)^{\ell-1} 2, (-1)^{\ell - 1}(m-1)]$ of length $\ell$, which is alternating except for the last term, and the alternating expansion
$$[-2,2,\ldots, (-1)^{\ell} 2, (-1)^{\ell - 1} 3, (-1)^{\ell} 2, \ldots, (-1)^{\ell + m -1} 2]$$ of length $\ell + m - 3,$ where $(-1)^{\ell - 1} 3$ is the $(\ell - 1)$-st term.

Let $A$ denote the continued fraction expansion $[\ell,-m]$, $B$ the continued fraction expansion $[\ell-1, 2, \ldots , (-1)^{m}2]$ of length $m$, $C$ the continued fraction expansion $[-2,2, \ldots, (-1)^{\ell - 1}2, (-1)^{\ell-1} (m-1)]$ of length $\ell$, and $D$ the continued fraction expansion  $$[-2,2,\ldots, (-1)^{\ell} 2, (-1)^{\ell - 1} 3, (-1)^{\ell} 2, \ldots, (-1)^{\ell + m -1} 2]$$ of length $\ell + m - 3$.

For $\ell =2$, because the expansion $B=[\ell-1, 2, \ldots , (-1)^{m}2]$ has an entry with absolute value less than 2, it does not correspond to a boundary slope. Likewise, for $m =2$,   $C= [-2, \ldots, (-1)^{\ell-1} (m-1)]$  has an entry with absolute value less than 2 and does not correspond to a boundary slope.

As before, which of $A,B$, or $C$ is the Seifert expansion depends on the parities of $\ell$ and $m$. (Note that $D$ is never the Seifert expansion.)
Indeed, just as in the previous case, the Seifert expansion is $A$ if both $\ell$ and $m$ are even, $B$  if $\ell$ is odd, and $C$ if $m$ is odd.

We now use Equation \eqref{b-slope} to calculate the boundary slopes for $A,B,$ $C,$ and $D$.  We see that
 $n^+ - n^- = 2$ for $A$,  $n^+ - n^- = 2 - m$ for $B$,  $n^+ - n^- = 2 -\ell$ for $C$, and $n^+ - n^- = 3 - \ell - m$ for $D$.
The resulting values of the boundary slopes $N_\bn$ for each of $A, B, C$ and $D$ for the three separate cases are summarized in Table \ref{boundaryslope2}.

\begin{table}[th]  
\begin{center}
 \begin{tabular}{|c|c|c|c|}
\hline $\bn=[n_1,\ldots, n_k]$& \multicolumn{3}{c|}{boundary slope $N_\bn$}\\ \cline{2-4}
continued fraction & $\ell, m$ even & $\ell$ odd & $m$ odd  \\ \hline
 $A=[\ell,-m]$ & $0$ & $2m$ & $2\ell$  \\
 $B= [\ell-1,2, \ldots ,(-1)^{m} 2]$ & $-2m$ & $0$ & $2(\ell - m) $ \\
 $C=[-2,2, \ldots, (-1)^{\ell-1}(m-1)]$ & $-2\ell$ & $2(m - \ell)$ & $0$ \\
 $D=[-2,2,\ldots, (-1)^{\ell - 1} 3, \ldots, (-1)^{\ell + m -1} 2]$ & $2(1 - \ell - m)$ & $2(1 - \ell)$ & $2(1 - m)$ \\ \hline
 \end{tabular} \end{center} 
 \caption{The boundary slopes for $J(\ell,m)$ } \label{boundaryslope2}
 \end{table}

The weights of $\bn=[n_1\ldots, n_k]$ for the Culler--Shalen seminorms are computed as in Case 1. The values $\prod_j (|n_j|-1)$ are once again easily determined and independent of the parities of $\ell$ and $m$; the results are summarized in Table \ref{weights2}.

\begin{table}[th] 
\begin{center}
 \begin{tabular}{|c|c|}
\hline $\bn=[n_1,\ldots, n_k]$ & $ \prod_j (|n_j|-1)$ \\ \hline
 $A=[\ell,-m]$ &  $(\ell-1)(m-1)$ \\
 $B= [\ell-1,2,-2, \ldots ,(-1)^{m} 2]$  & $\ell-2$ \\
 $C=[-2,2,\ldots,(-1)^{\ell-1} (m-1)]$ &   $m - 2$ \\
 $D=[-2,2,\ldots, (-1)^{\ell - 1} 3, \ldots, (-1)^{\ell + m -1} 2]$ & $2$ \\ \hline
 \end{tabular} \end{center} 
 \caption{Computing the weights for $J(\ell,m)$ } \label{weights2}
 \end{table}

 The next theorem
  summarizes this discussion and presents a computation of seminorms for $J(\ell,m)$.
\begin{theorem} \label{CS-formulaII} If $K=J(\ell,m)$ with $\ell,m \geq 2$ is a double twist knot, then the Culler--Shalen seminorm of $p/q$ is given by
 $$ 2\| p/q \|_{T}=  \\
\begin{cases}
\begin{split}
(\ell m &- \ell -m)|p| + (\ell - 2)|p + 2mq| \\
& \!\!\!\! + (m - 2)|p + 2\ell q| + 2 |p + 2(\ell + m-1)q|
\end{split} & \text{if $\ell, m$ are even,}  \vspace{3mm}  \\
\begin{split}
(\ell&-1)(m-1)|p - 2mq| + (\ell-3)|p| \\
&+ (m - 2)|p + 2(\ell - m)q| + 2 |p+2(\ell - 1)q|
\end{split}
& \text{if $\ell$ is odd,} \vspace{3mm}  \\
\begin{split}
(\ell &-1)(m-1)|p-2\ell q| +(\ell - 2)|p+2(m-\ell)q| \\
& +(m-3)|p| + 2|p + 2(m-1)q|
\end{split} & \text{if $m$ is odd.}
\end{cases} $$
 \end{theorem}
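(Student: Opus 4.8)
The plan is to specialize Ohtsuki's formula
$$2\|p/q\|_T = 2\Phi(p,q) = -|p| + \sum_{\bn}\left|p - N_\bn q\right|\prod_{j}\left(|n_j|-1\right)$$
to $K = J(\ell,m)$, where the sum ranges over the continued fraction expansions of $\be/\al$ and of $(\be-\al)/\al$ all of whose entries satisfy $|n_j|\ge 2$. The preceding discussion shows these are exhausted by the four expansions $A, B, C, D$, so the sum has at most four terms. First I would read off the boundary slopes $N_\bn$ from Table \ref{boundaryslope2} and the weights $\prod_j(|n_j|-1)$ from Table \ref{weights2} in each of the three parity cases and substitute, using for instance $|p - N_\bn q| = |p + 2mq|$ when $N_\bn = -2m$.

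The one delicate point is the global $-|p|$ correction, which must be attached to the slope-$0$ surface. I would first verify that, among $A, B, C, D$, the unique expansion with $N_\bn = 0$ is the Seifert expansion: this follows from Table \ref{boundaryslope2} together with the parity constraints (since $K$ is a knot, $\ell$ and $m$ are not both odd) and the bounds $\ell, m \ge 2$, which rule out any accidental vanishing of the remaining three slopes. Granting this, the $-|p|$ term lowers the coefficient of $|p|$ by exactly one: when $\ell,m$ are even the Seifert expansion is $A$, giving $(\ell-1)(m-1)|p| - |p| = (\ell m - \ell - m)|p|$; when $\ell$ is odd it is $B$, giving $(\ell-2)|p| - |p| = (\ell-3)|p|$; and when $m$ is odd it is $C$, giving $(m-2)|p| - |p| = (m-3)|p|$. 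Matching the three remaining terms against the stated formulas is then a routine check.

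The main obstacle, and really the only subtlety worth flagging, is the behavior at the edge of the parameter range. For $\ell = 2$ the expansion $B$ begins with the entry $\ell - 1 = 1$ and so fails the condition $|n_j| \ge 2$, hence is absent from Ohtsuki's sum; likewise $C$ drops out when $m = 2$. In both situations the associated weight, $\ell - 2$ or $m - 2$, vanishes, so the corresponding term disappears from the stated formula automatically and the expression remains valid, exactly as in the remark following Theorem \ref{CS-formulaI}. One should also note that in these degenerate cases $B$ (respectively $C$) is never the Seifert expansion, since $\ell = 2$ and $m = 2$ are both even, so dropping it does not disturb the placement of the $-|p|$ correction. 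Assembling the three parity cases then completes the proof.
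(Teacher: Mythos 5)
Your proposal is correct and takes essentially the same route as the paper: there the theorem is obtained precisely by substituting the slopes of Table \ref{boundaryslope2} and the weights of Table \ref{weights2} into Ohtsuki's corrected formula, with the $-|p|$ correction absorbed into the Seifert (slope-$0$) expansion and the $\ell=2$, $m=2$ degenerations dismissed because the corresponding weights $\ell-2$ and $m-2$ vanish. Your observation that the dropped expansion is never the Seifert one in those degenerate cases is the same point made in the remark following the theorem, so nothing in your argument diverges from the paper's.
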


Note that $\| \cdot \|_T$ is a norm unless $\ell=2=m.$
In the case $\ell=2$, $B$ does not correspond to a boundary slope, but since the corresponding weight is $\ell-2$, which evaluates to zero, the formula in Theorem \ref{CS-formulaII} remains valid.
Similarly, in the case $m=2$, $C$ does not correspond to a boundary slope, but since the corresponding weight is $m-2$, which evaluates to zero,  the formula remains valid.

Finally, since $J(2,m) = J(-2,m-1)$ and $J(\ell, 2) = J(\ell-1, -2)$, we see these knots (or their mirror images) are also covered in Case I.

These computations may be combined with Theorem \ref{two-b} to compute $\la_\SLC(M_{p/q})$ for manifolds which result from admissible surgeries on double twist knots.

\begin{corollary} \label{doubletwist}
Let $J = J(\ell, \pm m)$ be a double twist knot, where $\ell$ and $m$ are positive.
 Suppose $p/q$ is not a strict boundary slope, and suppose no $p'$-th root of unity is a root of the Alexander polynomial of
$K$, where $p'=p$ if $p$ is odd and $p' = p/2$ if $p$ is even. Then
$$\la_\SLC (M_{p/q} ) =  \begin{cases}
\frac{1}{2} \| p/q\|_{T} & \text{if $p$ is even,} \\
\frac{1}{2} \| p/q \|_{T} - 1/4 (\ell m  \mp 1 -1) & \text{if $p$ is odd.}
\end{cases} $$
Here $\|p/q\|_T$ is given by Theorem \ref{CS-formulaI} for $J(\ell,-m)$ and by Theorem \ref{CS-formulaII} for $J(\ell,m)$.
\end{corollary}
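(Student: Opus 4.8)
The plan is to obtain this as a direct consequence of Theorem \ref{two-b} together with the explicit seminorm computations in Theorems \ref{CS-formulaI} and \ref{CS-formulaII}; the only genuine content is identifying the correction term $(\al-1)/4$ in terms of $\ell$ and $m$. First I would recall from the discussion of Case~I that the double twist knot $J(\ell,-m)$ coincides with the two-bridge knot $K(\al,\be)$ for $\al = \ell m + 1$ and $\be = m$, while from Case~II the knot $J(\ell,m)$ coincides with $K(\al,\be)$ for $\al = \ell m - 1$ and $\be = m$. Writing $J = J(\ell,\pm m)$ uniformly, both cases are captured by $\al = \ell m \mp 1$, the upper and lower signs corresponding to $J(\ell,m)$ and $J(\ell,-m)$ respectively.

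Next I would verify that the hypotheses of Theorem \ref{two-b} are satisfied. Since $K$ is a two-bridge knot, Proposition \ref{reg} guarantees that every slope $p/q$ is regular, so the only remaining conditions for $p/q$ to be admissible are that $p/q$ not be a strict boundary slope and that no $p'$-th root of unity be a root of the Alexander polynomial of $K$ --- which are precisely the two hypotheses imposed in the statement of the corollary. Thus Theorem \ref{two-b} applies verbatim to $J(\ell,\pm m)$.

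It then remains only to substitute. For even $p$ the formula of Theorem \ref{two-b} reads $\la_\SLC(M_{p/q}) = \frac{1}{2}\|p/q\|_{T}$ with no correction term, which is exactly the even case of the corollary. For odd $p$ the correction term is $(\al-1)/4$; substituting $\al = \ell m \mp 1$ yields $(\al-1)/4 = \frac{1}{4}(\ell m \mp 1 - 1)$, which is the odd case of the corollary. Finally, the total seminorm $\|p/q\|_{T}$ is supplied by Theorem \ref{CS-formulaI} in Case~I and by Theorem \ref{CS-formulaII} in Case~II, completing the argument.

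The one point demanding care is the bookkeeping of the sign conventions, so that the placeholder $\pm$ in $J(\ell,\pm m)$, the induced value $\al = \ell m \mp 1$, and the $\mp$ appearing in the correction term all remain mutually consistent. Beyond this substitution there is no analytic difficulty, since all of the structural work has already been carried out in Proposition \ref{reg}, Theorem \ref{two-b}, and the two seminorm computations.
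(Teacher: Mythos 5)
Your proposal is correct and follows exactly the route the paper intends: the paper offers no separate proof of Corollary \ref{doubletwist}, merely noting that Theorems \ref{CS-formulaI} and \ref{CS-formulaII} ``may be combined with Theorem \ref{two-b},'' and your argument carries out precisely this combination, including the correct identification $\al = \ell m \mp 1$ (so that $(\al-1)/4 = \tfrac{1}{4}(\ell m \mp 1 - 1)$) with consistent sign coupling in both cases.
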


\subsection{A nontriviality result}
In this subsection, we prove that the $\SLC$ Casson invariant is nontrivial for nearly all $p/q$ surgeries on a two-bridge knot.
Our proof makes essential use of the classifications of exceptional surgeries on two-bridge knots given by Brittenham and Wu \cite{BW}.

This raises the question of whether, and under what circumstances, the $\SLC$ Casson invariant $\la_\SLC(\Si)$ will vanish for a given 3-manifold $\Si$. This will occur for instance whenever
$X^*(\Si)$ has no zero-dimensional components.
For example, if $\pi_1(\Si)$ is cyclic, then $X^*(\Si) = \varnothing$ and so $\la_\SLC(\Si)=0$ follows immediately. This shows that any
cyclic surgery on a knot $K$ necessarily has trivial $\SLC$ Casson invariant.
However, Theorem 18.2  in \cite{BZ2} shows that hyperbolic two-bridge knots do
not admit finite surgeries. In particular, they do not admit any cyclic surgeries, so one cannot produce examples of this type under Dehn surgery on a two-bridge knot.

Using the operation of spliced sum, one can construct homology 3-spheres $\Si$ whose character varieties $X^*(\Si)$ are nonempty but do not contain any zero-dimensional components (see \cite{BC2} for more details). These examples have $\la_\SLC(\Si) =0$ and include 3-manifolds obtained by $(-1)$ Dehn surgery on the untwisted Whitehead double of a knot $K$ in $S^3$.
It is an interesting question whether one can find a knot for which many (or most) Dehn surgeries have $\la_\SLC(\Si)=0$. In what follows, we show that such examples are not to be found among two-bridge knots.

Suppose now $K$ is a two-bridge knot, and let $M =S^3 \sm \tau(K)$ denote its complement and $M_{p/q}$ the closed 3-manifold obtained by $(p/q)$--Dehn surgery on $K$.

We define a subset $\cE_K \subset \ZZ$ of the set of exceptional slopes for $K$ as follows.
For $K\neq J(\ell,\pm m)$, set $\cE_K = \varnothing.$
For $K = J(\ell, \pm m)$ with $\ell, m >2$,   set $\cE_K = \{0\}$ if $\ell, m$ are both even and set $\cE_K = \{2m\}$ if $\ell$ is odd and $m$ is even.
For $\ell > 2$ and $K = J(\ell, 2)$, set $\cE_K = \{- 4\}$. For $K = J(\ell, -2)$, set $\cE_K = \{ 4\}$.
Finally, for the figure eight knot $K=J(2,-2)$, set $\cE_K = \{ 4, -4\}.$
Note that $\cE_K$ has at most two slopes, and apart from the figure eight knot, $|\cE_K | \leq 1.$
 \begin{theorem} \label{nontriv}
Suppose $K$ is a hyperbolic two-bridge knot in $S^3$. If $p/q \neq 1/0$ with   $p/q \not\in \cE_{K}$, then
$\la_\SLC(M_{p/q}) > 0$.

 \end{theorem}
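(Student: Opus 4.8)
The plan is to combine the surgery formula of Theorem \ref{two-b} with the seminorm computations of Theorems \ref{CS-formulaI} and \ref{CS-formulaII}, together with the Brittenham--Wu classification of exceptional surgeries. The key point is that Theorem \ref{two-b} expresses $\la_\SLC(M_{p/q})$ as $\tfrac{1}{2}\|p/q\|_T$ when $p$ is even and as $\tfrac{1}{2}\|p/q\|_T - (\al-1)/4$ when $p$ is odd, but both formulas require that $p/q$ be an admissible slope, i.e.\ not a strict boundary slope and satisfying the Alexander polynomial root condition. So the strategy splits into two tasks: first, show that the stated seminorm expression is \emph{strictly positive} for every slope $p/q \neq 1/0$ outside $\cE_K$; and second, handle the slopes where Theorem \ref{two-b} does not directly apply (boundary slopes, slopes violating the Alexander condition, and the correction term in the odd case), checking that nontriviality still holds by other means.

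For the positivity of the seminorm, I would first invoke the discussion following Theorem \ref{CS-formulaII}: when $K$ is hyperbolic and not a torus knot, the total seminorm $\|\cdot\|_T$ is an honest \emph{norm} on $H_1(\partial M;\RR)$ (this fails only for $J(2,2)$ and the torus-knot specializations, which are not hyperbolic two-bridge knots). Since a norm vanishes only at $0$ and $p\mer+q\lng \neq 0$ for any slope $p/q \in \QQ\cup\{\infty\}$, we get $\|p/q\|_T > 0$ for \emph{all} slopes once $K$ is a norm curve. Thus in the even case $\la_\SLC(M_{p/q}) = \tfrac12\|p/q\|_T > 0$ immediately, provided the slope is admissible. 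The delicate case is $p$ odd, where one must show $\tfrac12\|p/q\|_T > (\al-1)/4$, equivalently $\|p/q\|_T > (\al-1)/2 = \Phi(1,0)$. The natural approach here is to compare $\|p/q\|_T$ to the minimal value of the norm: since $\|1/0\|_T = (\al-1)/2$ realizes the correction term, and $1/0$ is typically a vertex (minimizing direction) of the Newton polygon of the norm, I would argue that for $p/q \neq 1/0$ the seminorm strictly exceeds this minimum \emph{except} precisely at the finitely many exceptional slopes collected in $\cE_K$. Concretely, using the explicit piecewise-linear formulas in Theorems \ref{CS-formulaI} and \ref{CS-formulaII}, one evaluates $\|p/q\|_T - (\al-1)/2$ and checks it is positive away from $\cE_K$; the minimum is attained along the edge of the norm polygon whose slope is the boundary slope closest to $0$, which is exactly how the elements of $\cE_K$ were selected.

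To deal with inadmissibility, I would argue as follows. For boundary slopes: the Brittenham--Wu classification tells us that a hyperbolic two-bridge knot has only finitely many exceptional (non-hyperbolic) surgery slopes, and by Proposition 3.2 of \cite{C} a hyperbolic filling automatically has $\la_\SLC > 0$. So any slope $p/q$ for which $M_{p/q}$ is hyperbolic needs no seminorm estimate at all, and the only slopes requiring care are the exceptional ones. The role of $\cE_K$ is then to isolate those exceptional slopes at which the seminorm-based lower bound might fail; for the remaining exceptional slopes one either verifies that $M_{p/q}$ is still hyperbolic (hence $\la > 0$ by \cite{C}) or that the Alexander/boundary-slope conditions are met so that Theorem \ref{two-b} applies and the seminorm bound gives positivity. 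For the Alexander polynomial condition: a root-of-unity obstruction can only occur for finitely many slopes $p$, and these can be cross-checked against the hyperbolicity criterion. I would organize this as a short case analysis keyed to the parity of $\ell, m$ and the structure of Tables \ref{boundaryslope} and \ref{boundaryslope2}.

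The main obstacle I expect is the odd-$p$ positivity estimate near the minimizing slope of the norm: proving $\|p/q\|_T > (\al-1)/2$ for all $p/q \notin \cE_K$ requires genuinely understanding where the Culler--Shalen norm attains its minimum and showing the gap is strict off the short exceptional list. The torus-knot degenerate cases ($\ell=1$, $m=1$, $J(2,2)$) must be excluded since they are not hyperbolic, but more subtly I must confirm that the \emph{general} two-bridge knot $K(\al,\be)$ which is not a double twist knot is still covered: for such $K$, $\cE_K = \varnothing$, so the claim is that the seminorm strictly exceeds $(\al-1)/2$ at \emph{every} slope $p/q\neq 1/0$. Establishing this uniformly, without the explicit double-twist formulas, is the crux; I would derive it from Ohtsuki's formula (the Proposition of \cite{O} quoted above) by showing that the contribution $-|p| + \sum_\bn |p - N_\bn q|\prod_j(|n_j|-1)$ strictly dominates $(\al-1)|p|$ whenever $q \neq 0$, using that every genuine two-bridge knot with $\al > 5$ has at least one boundary slope surface contributing a nonzero multiple of $|p - N_\bn q|$. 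Tying the vanishing set of this dominance inequality precisely to $\cE_K$ is where the real work lies.
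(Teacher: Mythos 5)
Your proposal assembles the right ingredients (Brittenham--Wu, hyperbolicity forcing positivity, the explicit seminorm formulas), but it inverts the structure of the argument, and the step you single out as ``the crux'' is both unnecessary and the one place your plan would genuinely bog down. The paper's proof requires \emph{no} seminorm estimate at any non-exceptional slope: by Theorem 1.1 of \cite{BW} together with the orbifold theorem \cite{BLP}, every non-exceptional surgery on a hyperbolic two-bridge knot is hyperbolic, hence has $\la_\SLC>0$ directly by Proposition 3.2 of \cite{C}; and, apart from the double twist knots, a hyperbolic two-bridge knot admits \emph{only} hyperbolic surgeries. So for a general $K(\al,\be)$ that is not a double twist knot --- exactly the case you describe as ``where the real work lies'' --- there is nothing left to verify: $\cE_K=\varnothing$ because there are no exceptional slopes at all, not because a uniform norm inequality holds. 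The surgery formula is invoked only at the finitely many exceptional slopes of double twist knots not lying in $\cE_K$ (namely $\{0,\mp1,\mp2,\mp3\}$ for $J(\ell,\pm2)$ and $\{0,\pm1,\pm2,\pm3\}$ for the figure eight), where Theorems \ref{CS-formulaI}, \ref{CS-formulaII} and \ref{two-b} give positivity by direct evaluation. You actually glimpse this reduction in your third paragraph (``the only slopes requiring care are the exceptional ones''), but then abandon it and return to the uniform bound $\|p/q\|_T>(\al-1)/2$ over all slopes and all two-bridge knots. That bound is not only superfluous; it is delicate --- in Ohtsuki's formula one gets $\Phi(1,1)=(\al-1)/2$ \emph{exactly} whenever every nonzero boundary slope $N_\bn$ satisfies $|1-N_\bn|=1$, so strictness at all slopes secretly encodes exceptional-surgery information, and proving it uniformly ``from Ohtsuki's formula alone'' would amount to re-deriving a piece of what \cite{BW} hands you for free.

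There is a second, related error: your characterization of $\cE_K$. You say its role is ``to isolate those exceptional slopes at which the seminorm-based lower bound might fail,'' selected as minimizing directions of the norm polygon. That is not what $\cE_K$ is. Its elements are the exceptional slopes of \cite{BW} that are \emph{strict boundary slopes}, and at those slopes the seminorm expression is perfectly positive --- for the figure eight, $\tfrac12\|\pm4/1\|_T=4$, and all slopes in $\cE_K$ are even, so the odd-$p$ correction term is never even in play there. The reason these slopes must be excluded from the theorem is that admissibility fails (condition (i) of the definition of admissible slope), so Theorem \ref{surg-form} simply does not apply and the norm computation yields no conclusion whatsoever about $\la_\SLC(M_{p/q})$. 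A write-up following your framing would be tempted to ``prove'' positivity at the slopes in $\cE_K$ as well, which is precisely what neither the paper nor any seminorm argument can do.
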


 \begin{proof}
We recall the classification of exceptional surgeries on two-bridge knots given by Theorem 1.1 in \cite{BW}, and we note that as a consequence of Thurston's orbifold theorem \cite{BLP}, every non-exceptional surgery is actually hyperbolic.
We note further that, in the case of a hyperbolic surgery, positivity of the $\SLC$ Casson invariant follows directly from
Proposition 3.2 of \cite{C}, though one can say more (see Remark \ref{remhyp}).

Using Theorem 1.1 \cite{BW}, we see that apart from the double twist knots considered previously, any other hyperbolic two-bridge knot admits only hyperbolic surgeries.
The double twist knots $J(\ell, \pm m)$ split into three possible cases, as follows.

\smallskip\noindent
 {Case I. $J(\ell,\pm m), \quad \ell, m >2.$}

In this case, there is exactly one exceptional surgery $\ga$. If $\ell$ and $m$ are both even, then $\ga =0$ and is a strict boundary slope (unless $J(\ell,\pm m)$ is fibered). \
 Otherwise, if $\ell$ is odd and $m$ is even, then $\ga = 2m$ and is a strict boundary slope. Note that Theorem \ref{surg-form} does not apply in either case.

\smallskip\noindent
 {Case II. $J(\ell,\pm 2), \quad \ell$ even.}

In this case, there are five exceptional surgeries: $0, \mp 1, \mp 2, \mp 3, \mp 4.$ Consider first $J(\ell,-2)$. For each $\ga \in \{0, 1, 2, 3\}$ the surgery formula applies, and Theorems \ref{CS-formulaI} and \ref{two-b} show that $\la_\SLC(M_\ga) >0.$ The remaining slope $\ga = 4$ is a  strict boundary slope, so Theorem \ref{surg-form} does not apply in this case.

The case of   $J(\ell,2)$ is very similar; for each $\ga \in \{0, -1, -2, -3\}$ the surgery formula applies, and Theorems \ref{CS-formulaII} and \ref{two-b} show that $\la_\SLC(M_\ga) >0.$ The remaining slope $\ga = -4$ is a  strict boundary slope, so Theorem \ref{surg-form} does not apply in this case.

\smallskip\noindent
 {Case III. $J(2, -2).$}

This is the figure eight knot, which has nine exceptional surgeries: $-4, -3, -2,$ $-1, 0,1, 2,  3, 4.$
For each $\ga \in \{0, \pm 1, \pm 2, \pm 3\},$ the surgery formula applies, and Theorems \ref{CS-formulaI} and \ref{two-b} show that $\la_\SLC(M_\ga) >0.$
The remaining two slopes $\{\pm 4\}$ are both strict boundary slopes and  Theorem \ref{surg-form} does not apply.
\end{proof}

\begin{remark} \label{remhyp}
In the case of a hyperbolic surgery,  we can use Mostow rigidity to see that the character $
\chi_\varrho$ of any representation lifting the discrete faithful $PSL(2,\CC)$ representation coming from the hyperbolic structure is an isolated smooth point of the character variety.
Theorem 2.1 in \cite{BC1} then applies to show $\chi_\varrho$ has intersection multiplicity $1$ and  contributes $1$ to the $\SLC$ Casson invariant. As a consequence, we see that, in terms of the surgery formula (Theorem \ref{surg-form}),  the component $X_i$ of $X(M)$ containing $\chi_\varrho$ must have weight $m_i=1.$

This has the following interesting consequence: if $p/q$ is a hyperbolic Dehn surgery on a hyperbolic knot $K$, any one-dimensional component $X_i$ of $X(M)$ with weight $m_i \neq 1$ cannot contain the character of a discrete faithful representation $\varrho\colon \pi_1(M_{p/q}) \to \SLC$ lifting the hyperbolic representation $\varrho_0\colon \pi_1(M_{p/q}) \to {PSL}(2,\CC).$
\end{remark}

\section{The $A$-polynomial and the $\widehat{A}$-polynomial} \label{A-polynomial}
In this section, we recall the definitions of the $A$-polynomial and $\widehat{A}$-polynomial, and we establish results on their $M$-degree and $L$-degree for two-bridge knots.
We use this to identify certain two-bridge knots $K$ for which the $A_K(M,L) \neq \widehat{A}_K(M,L)$,
and we illustrate how to use the Culler--Shalen seminorm data to determine $\widehat{A}_K(M,L)$ in the specific cases
of the knots $7_4$ and $8_{11}.$ We end the section by examining the relationship between
the $\SLC$ knot invariant $\la'_{\SLC}(K)$
and the $M$-degree of $\widehat{A}_{K}(M,L).$

\subsection{Degrees of the $\widehat{A}$-polynomial}
We begin by recalling the definition of the $A$-polynomial $A_K(M,L)$  from \cite{CCGLS} (see also \cite{CL1,CL2}). Given a knot
$K$ in $S^3$, let $M = S^3 \sm \tau(K)$ be its complement and choose  a standard meridian-longitude pair $(\mer,\lng)$ for $\pi_1(\partial M)$. Setting
$$\La = \{ \varrho\colon\pi_1(\partial M) \lto \SLC \mid \text{$\varrho(\mer)$ and  $\varrho(\lng)$ are diagonal matrices} \},$$
notice that the eigenvalue map  $\La \lto \CC^* \times \CC^*$, which is defined by setting $\varrho \mapsto (u,v) \in \CC^* \times \CC^*$, where $$\varrho(\mer) = \begin{pmatrix}u& 0 \\ 0 &u^{-1}\end{pmatrix} \text{ and }\varrho(\lng) = \begin{pmatrix}v& 0 \\ 0 &v^{-1}\end{pmatrix},$$
 identifies $\La$ with  $\CC^* \times \CC^*$ and that the natural projection $p\colon \La \lto X(\partial M)$ is a degree 2, surjective, regular map.

The natural inclusion $\pi_1(\partial M) \lto \pi_1(M)$ induces a map $r\colon X(M) \lto X(\partial M)$, which is regular. We define $V \subset X(\partial M)$ to be the Zariski closure of the union of the image $r(X_i)$ over each component $X_i \subset X(M)$ for which $r(X_i)$ is one-dimensional, and we set
$D \subset \CC^2 $ to be the Zariski closure of the algebraic curve $p^{-1}(V) \subset \La,$ where we  identify $\La$ and  $\CC^* \times \CC^*$ via the eigenvalue map.
The $A$-polynomial $A_K(M,L)$ is just the defining polynomial of the plane curve $D \subset \CC^2$;
it is well-defined up to sign by requiring it to have integer coefficients with greatest common divisor one. By convention, we remove the factor $L-1$ associated to the reducible representations in $A_K(M,L)$ and also delete any repeated factors.

In  \cite{BZ2}, Boyer and Zhang define an $A$-polynomial $A_{X_i}(M,L)$ for each one-dimensional component $X_i$ of $X(M)$ that is a norm curve (see Definition \ref{CS-seminorm}). Their approach takes into account repeated factors by using the degree of the restriction $r|_{X_i}$ as the multiplicity. Although the definition of $A_{X_i}(M,L)$ in \cite{BZ2} assumes $X_i$ is a norm curve, the approach works for any one-dimensional component $X_i$ of $X(M)$ such that $r(X_i)$ is one-dimensional \cite{BZ0}.
 Taking the product
$$\widehat{A}_K(M,L) =  A_{X_1}(M,L) \cdots A_{X_n}(M,L)$$ over all one-dimensional components $X_i$ of $X(M)$ such that $r(X_i)$ is one-dimensional, we obtain an alternative version of the $A$-polynomial  that includes factors with multiplicities. For small knots, it is not difficult to check that $A_K(M,L)$ and $\widehat{A}_K(M,L) $ have the same factors, only that $\widehat{A}_K(M,L)$ may include some repeated factors.

For each such curve $X_i$, the Culler--Shalen seminorm and the polynomial ${A}_{X_i}(M,L)$ are intimately related. In fact the Culler--Shalen seminorm  $\| \cdot \|_{i}$  is precisely the width function norm determined by the Newton polygon of ${A}_{X_i}(M,L)$ by Proposition 8.8 of \cite{BZ2}.  (Here note that the total Culler---Shalen seminorm defined in this paper is half that of \cite{BZ2}.) In particular, for a curve $X_i$ component of $X(M),$ the $A$-polynomial $A_{X_i}(M,L)$ has $L$-degree given by $\|1/0\|_{i}$ and $M$-degree given by $\| 0/1 \|_{i}$ (Proposition 6.6, \cite{BZ2}).
Since $\widehat{A}_K(M,L)$ is a product over components $X_i$ in $X(M),$ we see that for two-bridge knots the $L$- and $M$-degrees of $\widehat{A}_K(M,L)$  are given by the total Culler---Shalen seminorm as $ \|1/0 \|_{T}$ and $ \| 0/1 \|_{T}$, respectively. (This last step makes use of the result in Proposition \ref{weights} saying that the weights $m_i$ are all equal one.)

Using the proof of Proposition \ref{corr}, we deduce the $L$-degree of $\widehat{A}_K(M,L)$ for two-bridge knots.
\begin{corollary}
If $K=K(\al,\be)$ is a two-bridge knot, then the $L$-degree of $\widehat{A}_K(M,L)$ is given by $(\al - 1)/2$.
\end{corollary}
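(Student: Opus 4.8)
The plan is to reduce the corollary to facts already assembled in the preceding discussion, so that it follows by a short chain of equalities. First I would recall that $L$-degrees add under multiplication of polynomials, so that the product formula $\widehat{A}_K(M,L) = \prod_i A_{X_i}(M,L)$ expresses the $L$-degree of $\widehat{A}_K(M,L)$ as $\sum_i \deg_L A_{X_i}(M,L)$. By Proposition 6.6 of \cite{BZ2} each summand equals $\|1/0\|_i$, and since the weights $m_i$ all equal one by Proposition \ref{weights}, this sum is precisely the total Culler--Shalen seminorm $\|1/0\|_{T} = \sum_i m_i \|1/0\|_i$. Thus the task reduces to evaluating the single quantity $\|1/0\|_{T}$.

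Next I would appeal to Ohtsuki's identity from Section \ref{csnorm}, which gives $\|p/q\|_{T} = \Phi(p,q)$, and specialize it to the meridional slope $p/q = 1/0$. This converts the computation of the $L$-degree into the computation of the number $\Phi(1,0)$.

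Finally I would extract the value $\Phi(1,0) = (\al-1)/2$ directly from the proof of Proposition \ref{corr}. There we observed that $1/0$--surgery on $K$ produces $S^3$, so that $\la_\SLC(M_{1/0}) = 0$; the surgery formula then forced $E_1 = 1/2\|1/0\|_{T} = \Phi(1,0)/2$, and Ohtsuki's computation in Section 1 of \cite{O} gave $\Phi(1,0) = (\al-1)/2$. Chaining the three steps yields the $L$-degree of $\widehat{A}_K(M,L)$ equal to $\|1/0\|_{T} = \Phi(1,0) = (\al-1)/2$, as asserted.

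Because every ingredient is already established, I do not expect a genuine obstacle; the sole substantive input is Ohtsuki's seminorm computation, which is cited rather than reproved. The one point warranting care is the normalization: the total seminorm $\|\cdot\|_{T}$ of this paper is half the Boyer--Zhang norm of \cite{BZ2}, so I would double-check that the $L$-degree identity from Proposition 6.6 of \cite{BZ2} is invoked consistently with this factor of two. Since that reconciliation has already been carried out in the discussion preceding the statement, no further bookkeeping is needed and the proof is complete.
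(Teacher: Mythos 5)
Your proposal is correct and follows essentially the same route as the paper: the $L$-degree of $\widehat{A}_K(M,L)$ is identified with $\|1/0\|_T$ via Proposition 6.6 of \cite{BZ2} together with Proposition \ref{weights}, and then evaluated as $\Phi(1,0)=(\al-1)/2$ using Ohtsuki's computation exactly as in the proof of Proposition \ref{corr}. Your extra care about the factor-of-two normalization between $\|\cdot\|_T$ and the Boyer--Zhang norm is well placed, and, as you note, it is already accounted for in the discussion preceding the corollary.
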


 In \cite{Nag}, this formula is given with $(\al-1)/2$
appearing as an upper bound for the $L$-degree of
$A_K(M,L)$, and we note that
one obtains a more precise statement by taking into account repeated factors, i.e. by replacing $A_K(M,L)$ with
$\widehat{A}_K(M,L)$.

This begs the question, given any small knot $K$,  to what extent do the two $A$-polynomials $A_K(M,L)$ and $\widehat{A}_K(M,L)$ coincide? Note that $A_K(M,L)$ is the polynomial that appears in the knot tables \cite{ChLi}, and for small knots, one can recover $A_K(M,L)$ from $\widehat{A}_K(M,L)$  by factoring and removing repeated factors.

 Using the calculations of Section \ref{double twist}, we determine the $M$-degree of $\widehat{A}_K(M,L)$ for  double twist knots.
These results can be used to identify knots for which the character variety of the boundary torus contains curves which are multiply covered under the map $X^*(M) \lto X(\partial M)$.
 In many cases, it can also be used to determine the degree of this covering on the various components. We illustrate this below in Section \ref{A-poly examples} and in the Appendix with specific examples, here we begin with the example of a torus knot.

Let $K$ be a $(p,q)$-torus knot different from the trefoil and set $M =  S^3 \sm \tau(K)$. We outline the argument that
 $A_K(M,L) \neq \widehat{A}_K(M,L)$, and a more detailed explanation can be found in \cite{BP}.
 Arguing as in Proposition 2.7 of \cite{CCGLS}, one can show that
$$A_K(M,L) = \begin{cases}
 1+LM^{2q} & \text{if $p=2$ and $q>2$ is odd,} \\
 (1+LM^{pq})(1-LM^{pq}) & \text{otherwise.} \\
 \end{cases}$$
In \cite{BP}, we enumerate the algebraic components of $X(M)$ and prove they all have multiplicity equal to one. As a consequence, it follows that
$$\widehat{A}_K(M,L) = \begin{cases}
 (1+LM^{pq})^{(p-1)(q-1)/4} (1-LM^{pq})^{(p-1)(q-1)/4} & \text{if $p$ and $q$ are odd,} \\
  (1+LM^{pq})^{p(q-1)/4} (1-LM^{pq})^{(p-2)(q-1)/4} & \text{if $p$ is even and $q$ is odd.} \\
\end{cases}$$
In particular, it follows that $A_K(M,L) \neq \widehat{A}_K(M,L)$ unless $K$ is the trefoil.

\begin{corollary} \label{cor-deg}
Consider the double twist knots $J(\ell, \pm m)$, where $\ell, m$ are positive integers.

\smallskip \noindent
{\rm Case I:} For $K=J(\ell,-m)$,  the $M$-degree of
$\widehat{A}_K(M,L)$ is given by
$$ \| {0}/{1} \|_{T} = \begin{cases}
  2\ell m &\text{if $\ell$ and $m$ are even,}\\
m(\ell m + 1) &\text{if $\ell$ is odd,}\\
 \ell (\ell m +1)  &\text{if $m$ is odd.}
  \end{cases}$$

\smallskip \noindent{\rm Case II:}
For $K=J(\ell,m)$ with $2 \le m \le \ell,$  the $M$-degree of
$\widehat{A}_K(M,L)$ is given by
$$ \| {0}/{1} \|_{T} = \begin{cases}
  2\ell m - 2 &\text{if $\ell$ and $m$ are even,}  \\
m^2 \, (\ell - 1)-(m-1)(m-2) &\text{if $\ell$ is odd,}  \\
 m (\ell-1)^2 - (\ell - m)+2(m - 1)  &\text{if $m$ is odd.}
  \end{cases}$$
\end{corollary}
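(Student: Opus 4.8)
The plan is to reduce the corollary to a direct evaluation of the seminorm formulas already established in Theorems \ref{CS-formulaI} and \ref{CS-formulaII}. As explained in the discussion preceding the statement, for a two-bridge knot the $M$-degree of $\widehat{A}_K(M,L)$ equals the total Culler--Shalen seminorm $\|0/1\|_T$; this rests on Propositions 6.6 and 8.8 of \cite{BZ2} together with the fact, proved in Proposition \ref{weights}, that all the weights $m_i$ equal one. Since the slope $0/1$ is $0\cdot\mer+1\cdot\lng$, i.e. the longitude direction, I would obtain the $M$-degree simply by substituting $(p,q)=(0,1)$ into the piecewise expressions for $2\|p/q\|_T$ supplied by those theorems and dividing by two.

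First I would treat Case I. Substituting $p=0$, $q=1$ into Theorem \ref{CS-formulaI} annihilates every $|p|$ term and leaves, in the two odd-parity branches, expressions of the shape $2m[(\ell-1)(m-1)+(\ell+m)]$ and $2\ell[(\ell-1)(m-1)+(\ell+m)]$. Using the identity $(\ell-1)(m-1)+(\ell+m)=\ell m+1$ these collapse to $2m(\ell m+1)$ and $2\ell(\ell m+1)$, while the even--even branch evaluates directly to $4\ell m$. Dividing by two yields the three stated values $2\ell m$, $m(\ell m+1)$, and $\ell(\ell m+1)$.

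Next I would treat Case II in the same way, applying the substitution to Theorem \ref{CS-formulaII}. Again the $|p|$ terms vanish, but now certain surviving summands in the odd-parity branches carry absolute values of the form $|\ell-m|$ (arising from the $B$- and $C$-slopes $2(\ell-m)$ and $2(m-\ell)$). Here I would invoke the standing assumption $\ell\ge m$ recorded at the beginning of Case II in Section \ref{double twist} to replace $|\ell-m|$ by $\ell-m$; each branch then becomes an unsigned polynomial, and a routine expansion matches it to the claimed closed form (for instance, $\ell m^2-2m^2+3m-2$ is seen to equal $m^2(\ell-1)-(m-1)(m-2)$).

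The computation is otherwise entirely mechanical, so the only real care is needed in the sign analysis of Case II: one must confirm that $\ell\ge m$ forces every absolute-value term to open with the expected sign, and check that the degenerate weights (the factor $\ell-2$ when $\ell=2$, or $m-2$ when $m=2$, both noted after Theorem \ref{CS-formulaII} as vanishing) do not disturb the formula. The main potential obstacle is therefore bookkeeping rather than any conceptual difficulty---pairing each parity branch with the correct seminorm formula and resolving the absolute values consistently with $\ell\ge m$.
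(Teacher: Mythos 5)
Your proposal is correct and is exactly the paper's proof: the authors likewise obtain Corollary \ref{cor-deg} by substituting $(p,q)=(0,1)$ into Theorems \ref{CS-formulaI} and \ref{CS-formulaII}, with the identification of $\deg_M \widehat{A}_K(M,L)$ with $\|0/1\|_T$ already established in the preceding discussion. Your added care with the sign of $|\ell-m|$ under the convention $\ell\ge m$ and with the vanishing weights $\ell-2$, $m-2$ is sound bookkeeping that the paper leaves implicit.
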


\begin{proof}
This follows immediately by applying Theorems \ref{CS-formulaI} and \ref{CS-formulaII}  to  compute $\| 0/1 \|_{T}$ in both cases.
\end{proof}

\subsection{The knots $7_4$ and $8_{11}$} \label{A-poly examples}
In this subsection, we investigate the $\widehat{A}$-polynomials for the two knots $7_4$ and $8_{11}$. We first consider the $7_4$ knot $K,$ which can be realized as the two-bridge knot $K=K(15,11)$ and as the double twist knot $K = J(4,4)$. According to (independent) computations of J. Hoste and M. Culler listed in \cite{ChLi}, the mirror image of this knot has $A$-polynomial given by
\begin{equation*} 
\begin{split}
A_K (M,L) & \; = L^5 M^{22} - 3L^4 M^{22} + 3L^3 M^{22} - L^2 M^{22}
 + 7L^4 M^{20} - 10L^3 M^{20} \\
 & + 3L^2 M^{20}  + 4L^4 M^{18} + 3L^3 M^{18}
 - 3L^2 M^{18} - 6L^4M^{16}  + 21L^3M^{16} \\
 & - 2L^2 M^{16}  + L^4 M^{14}
 - 3L^3M^{14}  + 10L^2M^{14} + L M^{14}  + 3L^4 M^{12} \\
 & - 17L^3 M^{12}
+ 6L^2 M^{12}  - 2L M^{12}  - 2L^4 M^{10} + 6L^3 M^{10} - 17L^2 M^{10} \\
& + 3L M^{10}  + L^4 M^8 + 10L^3 M^8 - 3L^2 M^8  + L M^8 - 2L^3 M^6
+ 21L^2 M^6 \\
&- 6L M^6  - 3L^3 M^4   + 3L^2 M^4 + 4L M^4  + 3L^3 M^2
- 10L^2 M^2  + 7L M^2\\
& - L^3 + 3L^2  - 3L+1
\end{split}
\end{equation*}

Corollary \ref{cor-deg}
implies that $\widehat{A}_K(M,L)$ has $M$-degree 30 and $L$-degree 7, and using sage \cite{sage}, one can easily factor $A_K (M,L) $ as a product of two irreducible polynomials
\begin{equation*} 
\begin{split} A_K (M,L) & \; =  (L^2 M^8 - L M^8 + L M^6 + 2L M^4 + L M^2 - L + 1)
\cdot  (L^3 M^{14} - 2L^2 M^{14}   \\
&+ L M^{14}+ 6L^2 M^{12} - 2L M^{12} + 2L^2 M^{10} + 3L M^{10}
- 7L^2 M^8 + 2L M^8  \\
&+ 2L^2 M^6 - 7L M^6+ 3L^2 M^4 + 2L M^4 - 2L^2 M^2 + 6L M^2 + L^2 - 2L + 1).
\end{split}
\end{equation*}
From this we conclude that $\widehat{A}_K(M,L)$ has the same irreducible  factors as $A_K(M,L)$ but the first factor is repeated with multiplicity two.

Note that the correspondence between the Culler--Shalen seminorm and the $\widehat{A}$-poly-nomial applies to each component. This gives us new insight into the Culler--Shalen norm in this instance. For if we examine the $\widehat{A}$-polynomials of each factor, we see that the boundary slopes contributing to the first factor are 0 and 8 and the boundary slopes contributing to the second are 0 and 14. Thus we can decompose Ohtsuki's formula for the total norm into its component norms: the first with slopes 0 and $-8$, each with weight 2; and the second with slopes 0 and $-14$, with weights 2 and 1, respectively. In fact we are able to determine which incompressible surfaces contribute to each factor: the Seifert surface contributes to both factors, the two surfaces with slope $-8$ contribute to the first factor, and the surface with slope $-14$ contributes to the second factor.

We perform a similar analysis for the knot $8_{11}$, which is the two-bridge knot $K(27,10)$. Referring to the appendix below, we find there are six incompressible surfaces with slopes $0,0, -4, 6, 6,12$ and weights $1, 1, 3, 1, 4, 3,$ respectively.
The total Culler--Shalen norm is given by
$$ \| p/q \|_{T}= 2|p| + 3|p + 4q| + 5 |p  - 6q| + 3 |p - 12q|,$$
and $\widehat{A}_K(M,L)$ has  $M$-degree 78 and  $L$-degree  13.

According to the computations of J. Hoste and M. Culler listed in \cite{ChLi}, ${A}_K(M,L)$ has $M$-degree 66 and $L$-degree 11:
 \begin{align*}
A_K (M,L)  &=  M^{66}(L^5 - 2L^4 + L^3) +M^{64}( -11 L^5 + 21L^4 - 10L^3) + \cdots \\
& +M^{12}(- L^{11} + 11 L^{10} + 79 L^9 - 154 L^8 + 202 L^7 - 58 L^6 -185 L^5 + 3 L^4) + \cdots \\
& - M^2(10L^8 - 21L^7 + 11L^6) + L^8 - 2L^7 + L^6.
\end{align*}

Factoring $A_K(M,L)$ using sage \cite{sage}, we find that
$ A_K (M,L) = (M^6 + L) \cdot F(M,L),$
where
 \begin{align*}
F (M,L)  & =  M^{60}(L^5 - 2L^4 + L^3) + M^{58}(-11 L^5 + 21 L^4 - 10L^3)+ \cdots \\
&+ M^{12}(-L^{10} + 11 L^9  + 79 L^8 - 174 L^7 + 269 L^6  - 198 L^5  - 97 L^4 + L^3) +\cdots \\
&+ M^2(-10L^7 + 21L^6 - 11L^5) + L^7 - 2L^6 + L^5
\end{align*}
 is the irreducible polynomial  corresponding to the canonical component.

Note that $F(M,L)$ has $M$-degree 60 and $L$-degree 10, thus $$\widehat{A}_K(M,L) = (M^6 + L)^3 F(M,L).$$
It follows that the character variety consists of the canonical curve, one (or more) seminorm curves corresponding to the boundary slope 6, plus the component of abelians. This allows us to again split the total Culler--Shalen norm into its constituent norm and seminorm pieces:
\begin{align*}
\| p/q \|_{1} &= 2|p| + 3|p + 4q| + 2 |p  - 6q| + 3 |p - 12q|,\\
\| p/q \|_2 &= 3 |p - 6q|.
\end{align*}
Note that in this instance there are two incompressible surfaces with boundary slope 6, one of weight 4 and one of weight 1. It seems likely that the surface with slope 6 and weight 4 contributes with weight 2 to each factor, and that  the surface with slope 6 and weight 1 contributes to the seminorm factor only. However, we cannot make this conclusion using degree considerations alone. We are also unable to decide whether the multiplicity 3 factor $(L+M^6)^3$ of $\widehat{A}_K(M,L)$ comes from a single curve $X_j \subset X(M)$ in the character variety which is mapped with degree three into $X(\partial N_K)$, or instead if there are two or three curves in $X(M)$ with the same image under restriction $r\colon X(M) \to X(\partial M)$.

Comparing calculations of the Culler--Shalen seminorms with known results on the $A$-polynomials, we can sometimes determine the $\widehat{A}$-polynomial for other 2-bridge knots. While these techniques are reliably successful at identifying knots $K$ with
$A_K(M,L) \neq \widehat{A}_K(M,L)$, it is not always possible to determine $\widehat{A}_K(M,L)$. This approach works best
for knots whose character variety has only a few irreducible components, whereas the results of Ohtsuki, Riley, and Sakuma \cite{ORS} show that there exist two-bridge knots whose character varieties contain arbitrarily many components. In those cases, degree considerations alone
 are not sufficient for determining the multiplicities of $\widehat{A}_K(M,L)$.

 \subsection{Relation to the $\SLC$ knot invariant}
In this subsection, we examine the relationship between the knot invariant $\la'_\SLC(K)$ defined
in Equation \eqref{knotinv} and the $M$-degree of the
$\widehat{A}$-polynomial.
Proposition 6.6 of \cite{BZ2} implies that  for any one-dimensional component $X_i$ of $X(M)$ whose image $r(X_i)$ is also one-dimensional, $\deg_M A_{X_i}(M,L) =  \| 0/1\|_i$. Since $\widehat{A}_K(M,L)$ is the product of $\widehat{A}_{X_i}(M,L)$ over all such $X_i$,
it follows that  $\deg_M \widehat{A}_K(M,L)  =  \sum_{i} \| 0/1\|_i.$
On the other hand, for $K$ a small knot, Theorem \ref{surg-form} shows that there
are  positive integral weights $m_i$ such that
 $$\la'_\SLC(K) =  \tfrac{1}{2} \sum_{i} m_i \| 0/1\|_{i}.$$
Thus, for any small knot, we have $\la'_\SLC(K) >0$ if and only if $\deg_M \widehat{A}_K(M,L) >0$. The next result
shows that  $\la'_\SLC(K) >0$ for all small knots. This was proved for hyperbolic knots in \cite{C} and for torus knots in  \cite{BC1}.

\begin{theorem} \label{detectsmall}
Suppose $K$ is a nontrivial small knot in $S^3.$ Then its $\SLC$ Casson knot invariant satisfies
$\la'_\SLC (K) >0.$
\end{theorem}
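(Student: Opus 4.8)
The plan is to reduce the statement to a positivity property of the $M$--degree of $\widehat{A}_K(M,L)$ and then to dispose of the two geometric types of nontrivial small knot separately. By the discussion immediately preceding the theorem, for a small knot $K$ one has $\la'_\SLC(K) = \frac{1}{2}\sum_i m_i\,\|0/1\|_i$ and $\deg_M \widehat{A}_K(M,L) = \sum_i \|0/1\|_i$, the sums ranging over the one--dimensional components $X_i \subset X(M)$ with $r(X_i)$ one--dimensional. Since every weight satisfies $m_i \geq 1$ and every term $\|0/1\|_i \geq 0$, it follows that $\la'_\SLC(K) > 0$ precisely when $\deg_M \widehat{A}_K(M,L) > 0$, which in turn holds precisely when some such component has $\|0/1\|_i > 0$, i.e.\ has nonconstant longitude eigenvalue. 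So the first step is simply to record this equivalence; after that, the entire problem becomes the geometric one of producing a single component on which the longitude eigenvalue is nonconstant.

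The second step is to invoke the classification of small knots. Because $K$ is small, its complement $M$ contains no closed essential surface; in particular $M$ is atoroidal, so by geometrization $M$ is either hyperbolic or Seifert fibered, and by Moser's classification of Seifert fibered knot complements the latter forces $K$ to be a torus knot. Thus it suffices to treat the hyperbolic and torus knot cases, and to exhibit in each a component with $\|0/1\|_i>0$.

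If $K$ is hyperbolic, I would use the canonical component $X_0 \subset X(M)$ containing the character of the discrete faithful representation. As recalled in Section~1, the Culler--Shalen construction (Section~1.4 of \cite{CGLS}) produces on $X_0$ a genuine \emph{norm} $\|\cdot\|_0$, not merely a seminorm; consequently $\|0/1\|_0 > 0$ and therefore $\sum_i \|0/1\|_i \geq \|0/1\|_0 > 0$. If instead $K$ is a torus knot, the norm argument is unavailable, since the total seminorm is then degenerate, and here I would appeal directly to the explicit $A$--polynomial computation recorded above: for a $(p,q)$--torus knot, $A_K(M,L)$ equals $1 + LM^{2q}$ or $(1+LM^{pq})(1-LM^{pq})$ up to sign, each of which has strictly positive $M$--degree, and since $\widehat{A}_K(M,L)$ has the same irreducible factors we again obtain $\deg_M \widehat{A}_K(M,L) > 0$. (The trefoil, excluded from that formula, is the double twist knot $J(2,2)$, whose $M$--degree $2\ell m - 2 = 6$ is positive by Corollary~\ref{cor-deg}.) Combining the two cases with the equivalence of the first step yields $\la'_\SLC(K) > 0$.

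The main obstacle, and the reason the two cases resist being merged, is that $\la'_\SLC(K)$ is governed by the longitude slope $0/1$ -- that is, by the $M$--degree, equivalently by the variation of the longitude eigenvalue -- rather than by the meridian. Nonvanishing of the meridian contribution (the $L$--degree) is comparatively easy and holds for every nontrivial knot, but it does not by itself force the longitude eigenvalue to vary. For hyperbolic knots the norm property of $X_0$ excludes any degenerate direction and in particular disposes of the longitude for free; for torus knots the seminorm genuinely degenerates, so one must instead read off the $M$--degree from the explicit polynomial. Reconciling these two distinct mechanisms through the hyperbolic/torus dichotomy -- which itself rests on geometrization together with Moser's theorem -- is the crux of the argument.
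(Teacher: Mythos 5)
Your proposal is correct, and on two of its three ingredients it coincides with the paper's own proof: the reduction to the torus/hyperbolic dichotomy (the paper phrases this as "torus, hyperbolic, or satellite; satellite knots are not small," which is the same standard fact you derive from atoroidality, Thurston's hyperbolization, and Moser's theorem), and the hyperbolic case, where both arguments use that the component containing a lift of the discrete faithful representation carries a genuine Culler--Shalen \emph{norm}, so $\|0/1\|_0>0$, while all weights satisfy $m_i\geq 1$. The genuine difference is the torus-knot case. The paper simply invokes Corollary 5.10(i) of \cite{BC1}, which gives the closed-form value $\la'_\SLC(K)=\tfrac{1}{4}pq(p-1)(q-1)>0$ for the $(p,q)$-torus knot; you instead route everything through the equivalence $\la'_\SLC(K)>0 \Leftrightarrow \deg_M \widehat{A}_K(M,L)>0$ (which the paper records just before the theorem) and then read positivity of the $M$-degree off the explicit torus-knot $A$-polynomials, handling the trefoil separately as $J(2,2)$ via Corollary \ref{cor-deg}. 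Both are valid and non-circular; the trade-off is that the paper's route yields the exact value of the invariant and rests only on the earlier computation in \cite{BC1}, whereas yours stays entirely inside the seminorm/$A$-polynomial framework of the present paper but leans on the torus-knot $A$-polynomial formulas (attributed to \cite{CCGLS} and the in-preparation \cite{BP}) and on the statement that $A_K$ and $\widehat{A}_K$ share irreducible factors for small knots. One small point worth making explicit in your step 1: the surgery-formula sum runs over components with $X_i\cap X^*(M)\neq\varnothing$, while $\deg_M\widehat{A}_K$ sums over all components with $r(X_i)$ one-dimensional; these index sets can differ by the abelian component, but since the longitude is null-homologous, that component contributes $0$ to the $M$-degree, so the equivalence survives.
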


\begin{proof}
Every knot is either a torus knot, a hyperbolic knot, or a satellite knot. As satellite knots are not small, the hypothesis implies that $K$ is either a torus knot or hyperbolic. Suppose first that $K$ is a $(p,q)$ torus knot. Since $\la'_\SLC(\overline{K})=\la'_\SLC({K})$, we can assume $p,q>0$ here. By Corollary 5.10 (i) \cite{BC1}, we see that $\la'_\SLC (K) = \tfrac{1}{4}pq(p-1)(q-1) >0.$

Now suppose that $K$ is a hyperbolic knot. The discrete faithful representation $\hat{\varrho} \colon \pi_1(M) \lto PSL(2,\CC)$ lifts to an $\SLC$ representation, and any component of $X(M)$ containing a lift is one-dimensional and is a norm curve
(see Definition \ref{CS-seminorm}).

By the surgery theorem, for each one-dimensional component $X_i$ of $X(M)$ such that $r(X_i)$ is one-dimensional, we have positive integral weights $m_i$, and the $\SLC$ Casson knot invariant
$\la'_\SLC(K)$ is equal to $\frac{1}{2}  \|0/1\|_T = \frac{1}{2} \sum_i m_i \| \lng \|_i.$ When $K$ is a small hyperbolic knot, any norm curve $X_i$ will have  $ \| 0/1\|_i >0,$ and since each $m_i \geq 1,$ this implies that $\la'_\SLC(K) >0$.
\end{proof}

It follows that $\deg_M \widehat{A}_K(M,L) >0$ for all nontrivial small knots.
Note that in the special case of a two-bridge knot $K$, Proposition \ref{weights} gives the more precise statement that
 $$\la'_\SLC(K) = \tfrac{1}{2} \deg_M \widehat{A}_K(M,L).$$
 In general, one can prove that $\deg_M A_K(M,L) >0$ for any nontrivial knot $K$ in $S^3$ (see \cite{B}),
but we do not know whether the same is true for the $\widehat{A}$-polynomial. This leads to the question: Is $\deg_M \widehat{A}_K(M,L) >0$ for every nontrivial knot $K$?

It is interesting to compare the $SU(2)$ and $\SLC$ Casson  knot invariants here. In contrast to the $\SLC$ knot invariant, the $SU(2)$ knot invariant $\la'_{SU(2)}(K)$ vanishes for many knots, including a number of  two-bridge knots.
 As the surgery formula  \ref{surg-form} only applies to small knots, it is not immediately evident how to define the  $\SLC$ knot invariant $\la'_{\SLC}(K)$ in general, and an interesting problem is  to extend $\la'_\SLC(K)$ to all knots and to determine whether it is nontrivial for all knots. This is quite likely related to the above question about nontriviality of $\deg_M \widehat{A}_K(M,L)$, and we hope to address these questions in future research.

\appendix

\section{Two-bridge knots up to 8-crossings}
In Table \ref{boundary} below, we collect all the information needed to determine the $\SLC$ Casson invariant for Dehn surgeries on knots with up to 8 crossings. Listed are the boundary slopes and Culler--Shalen weights. (Note that boundary slopes corresponding to more than one incompressible surface are listed multiply, with weights for each surface given.) Also listed are $\deg_M$ and $\deg_L$ of $A_K(M,L)$, and for knots with $A_K(M,L) \neq \widehat{A}_K(M,L)$, we also list  $\deg_M$ and $\deg_L$ of  $\widehat{A}_K(M,L)$. Knots with $A_K(M,L) \neq \widehat{A}_K(M,L)$ are indicated by $K^\star$.

\newpage

\begin{table}[h!] \footnotesize
\begin{tabular}{cc}
\begin{minipage}{0.5\textwidth}
\begin{tabular}{||r|ccc|rr||}
\hline &&&&& \\
 $K$ & &$\deg_M$ & $\deg_L$ & slope & weight \\ \hline
$3_1$ &$A_K$& 6 & 1 & 0 & 0\\
&& && 6 & 1\\ \hline

$4_1$ &$A_K$& 8 & 2 & 0 & 0\\
&& & & $-4$ & 1\\
&& & & 4 & 1\\ \hline

$5_1^\star$ &$A_K$& 10 & 1 & 0 & 0\\
&$\widehat{A}_K$& 20& 2& 10 & 2\\ \hline

$5_2$ &$A_K$& 14 & 3 & 0 & 1\\
&& & & 4 & 1\\
&& & & 10 & 1\\ \hline

$6_1$ &$A_K$& 16 & 4 & 0 & 1\\
&& & & $-4$ & 2\\
&& & & 8 & 1\\ \hline

$6_2$ &$A_K$& 30 & 5 & 0 & 0\\
&& & & 2 & 1\\
&& & & $-4$ & 1\\
&& & & 8 & 3\\ \hline

$6_3$ &$A_K$& 28 & 6 & 0 & 0\\
&& & & 2 & 1\\
&& & & $-2$ & 1\\
&& & & 6 & 2\\
&& & & -6 & 2\\ \hline

$7_1^\star$ &$A_K$& 14 & 1 & 0 & 0\\
&$\widehat{A}_K$& 42& 3 & 14 & 3\\ \hline

$7_2$ &$A_K$ & 22 & 5 & 0 & 2\\
&& & & 4 & 2\\
&& & & 14 & 1\\ \hline

$7_3$ &$A_K$& 52 & 6 & 0 & 1\\
&& & & $-8$ & 3\\
&& & & $-14$ & 2\\ \hline

$7_4^\star$ &$A_K$& 22 & 5 & 0 & 4\\
&$\widehat{A}_K$& 38& 7& $-8$ & 1\\
&& & & $-8$ & 1\\
&& & & $-14$ & 1 \\ \hline

  $7_5$ &$A_K$& 68 & 8 & 0 & 1\\
&& & & 4 & 1\\
&& & & 6 & 2\\
&& & & 10 & 1\\
&& & & 14 & 3\\ \hline

\end{tabular}
  \end{minipage}

\begin{minipage}{0.5\textwidth}
 \begin{tabular}{||r|rcc|rr||}

\hline &&&&& \\
$K$ && $\deg_M$ & $\deg_L$ & slope & weight \\ \hline

$7_6$ &$A_K$& 54 & 9 & 0 & 0\\
&& & & 0 & 1\\
&& & & 4 & 1\\
&& & & $-4$ & 2\\
&& & & 6 & 2\\
&& & & 10 & 3\\ \hline

$7_7^\star$ &$A_K$& 38 & 7 & 0 & 0\\
&$\widehat{A}_K$& 48& 10 & 0 & 1\\
&& & & 0 & 1\\
&& & & $-4$ & 1\\
&& & & $-4$ & 1\\
& && & 6 & 4\\
& & && $-8$ & 2\\ \hline

$8_1$ &$A_K$& 24 & 6 & 0 & 2\\
&& & & -4 & 3\\
&& & & 12 & 1\\ \hline

$8_2$ &$A_K$& 76 & 8 & 0 & 0\\
&& & & $-4$ & 1\\
&& & & 6 & 2\\
& && & 12 & 5\\ \hline

$8_3$ &$A_K$& 32 & 8 & 0 & 4\\
& && & 8 & 2\\
& && & -8 & 2\\ \hline

 $8_4$ &$A_K$& 58 & 9 & 0 & 1\\
& & && $-2$ & 1\\
&& & & 8 & 6\\
&& & & $-8$ & 1\\ \hline

$8_6$ &$A_K$& 62 & 11 & 0 & 1\\
& && & 2 & 3\\
& & && $-4$ & 2\\
&& & & 6 & 2\\
& && & 12 & 3\\ \hline

$8_7$ &$A_K$& 70 & 11 & 0 & 0\\
& & && $-2$ & 3\\
&& & & 6 & 2\\
&& & & $-6$ & 2\\
& && & $-10$ & 4\\ \hline

\end{tabular}
\vspace*{0.95cm}
\end{minipage}
\end{tabular}
\end{table}

 \newpage

\begin{table}[h!] \footnotesize

\begin{tabular}{cc}
\begin{minipage}{0.5\textwidth}
\begin{tabular}{||r|rcc|rr||}
\hline &&&&& \\
 $K$ && $\deg_M$ & $\deg_L$ & slope & weight \\ \hline

 $8_8$ &$A_K$& 60 & 12 & 0 & 0\\
& & && 2 & 3\\
&& & & $-4$ & 1\\
&& & & 6 & 4\\
& && & $-6$ & 1\\
& & && $-10$ & 2 \\  \hline

 $8_9$ &$A_K$& 60 & 12 & 0 & 0\\
&& & & 2 & 3\\
&& & & $-2$ & 3\\
& && & 4 & 4\\
& && & $-4$ & 4\\ \hline

$8_{11}^\star$ &$A_K$& 66 & 11 & 0 & 1\\
&$\widehat{A}_K$& 78& 13 & 0 & 1\\
& && & $-4$ & 3\\
& && & 6 & 1\\
& & && 6 & 4\\
& & && 12 & 3\\ \hline

\end{tabular}
\vspace*{2.4cm}
\end{minipage}

\begin{minipage}{0.5\textwidth}
 \begin{tabular}{||r|rcc|rr||}
 \hline &&&&& \\
 $K$ &&$\deg_M$ & $\deg_L$ & slope & weight \\ \hline

$8_{12}$ &$A_K$& 72 & 14 & 0 & 0\\
&& & & 0 & 2\\
&& & & 4 & 1\\
& && & 4 & 2\\
& & && $-4$ & 1\\
&& & & $-4$ & 2\\
&& & & 8 & 3\\
& && & $-8$ & 3\\ \hline

$8_{13}$ &$A_K$& 72 & 14 & 0 & 1\\
&& & & 0 & 1\\
&& & & $-2$ & 1\\
& && & $-4$ & 2\\
& && & 6 & 6\\
& & && -6 & 1\\
& & && $-10$ & 2\\ \hline

$8_{14}$ &$A_K$& 86 & 15 & 0 & 1\\
&& & & 0 & 2\\
&& & & 4 & 1\\
& && & 4 & 2\\
& & && $-4$ & 3\\
& && & 6 & 1\\
& & && 8 & 1\\
& & && 12 & 4 \\ \hline

 \end{tabular}
\end{minipage}
\end{tabular}
  \caption{Boundary slopes and weights for two-bridge knots with up to 8 crossings.} \label{boundary}
 \end{table}

  \noindent
{\it Acknowledgements.} Both authors would like to thank Steve Boyer for several helpful discussions. We also appreciate the careful reading and thoughtful suggestions of the referee. The first author would also like to thank the Max Planck Institute for Mathematics in Bonn for its support.


\end{document}